\theoremstyle{plain}
\newtheorem{theorem}{Theorem}[section]
\newtheorem{question}[theorem]{Question}
\newtheorem{conjecture}[theorem]{Conjecture}
\newtheorem{corollary}[theorem]{Corollary}
\newtheorem{observation}[theorem]{Observation}
\newtheorem{lemma}[theorem]{Lemma}
\newtheorem{proposition}[theorem]{Proposition}
\newtheorem{remark}[theorem]{Remark}
\newenvironment{customtheorem}[1]
{\innercustomtheorem}
{\endinnercustomtheorem}
\newenvironment{customcorollary}[1]
{\innercustomcorollary}
{\endinnercustomcorollary}
\newcommand{\field}[1]{\mathbb{#1}}
\newcommand{\N}{\field{N}}
\newcommand{\Z}{\field{Z}}
\newcommand{\K}{\field{K}}
\begin{document}

\title{On the toric ideals of matroids of a fixed rank}

\author[Micha\l\ Laso\'{n}]{Micha\l\ Laso\'{n}}

\dedicatory{\upshape
Institute of Mathematics of the Polish Academy of Sciences,\\ ul.\'{S}niadeckich 8, 00-656 Warszawa, Poland\\ \textmtt{michalason@gmail.com}}

\thanks{Research supported by Polish National Science Centre grant no. 2015/19/D/ST1/01180. The paper was initiated during author's stay at Freie Universit\"at Berlin in the frame of Polish Ministry ``Mobilno\'s\'c Plus'' program.}
\keywords{Matroid, toric ideal, base exchange, generating set, Gr\"{o}bner basis, Betti table}

\begin{abstract}
In $1980$ White conjectured that every element of the toric ideal of a matroid is generated by quadratic binomials corresponding to symmetric exchanges.

We prove White's conjecture for high degrees with respect to the rank. This extends our result \cite{LaMi14} confirming White's conjecture `up to saturation'. 
Furthermore, we study degrees of Gr\"{o}bner bases and Betti tables of the toric ideals of matroids of a fixed rank.
\end{abstract}

\maketitle

%%%%%%%%%%%%%%%%%%%%%%%%%%%%%%%%%%%%%%%%%%%%%%%%%%%%%%%%%%%%%%%%%%%%%%%%%%%%%%%%%%%%%%%%%%%%%%%%%%%%%%%%%%%%%%%%%%%%%%%%%%%%%%%%%%%%%%%%%%%%%%%%%%%%%%%%%%%%%%%%%%%%%%%%%%
\section{Introduction}
%%%%%%%%%%%%%%%%%%%%%%%%%%%%%%%%%%%%%%%%%%%%%%%%%%%%%%%%%%%%%%%%%%%%%%%%%%%%%%%%%%%%%%%%%%%%%%%%%%%%%%%%%%%%%%%%%%%%%%%%%%%%%%%%%%%%%%%%%%%%%%%%%%%%%%%%%%%%%%%%%%%%%%%%%%

%%%%%%%%%%%%%%%%%%%%%%%%%%%%%%%%%%%%%%%%%%%%%%%%%%%%%%%%%%%%%%%%%%%%%%%%%%%%%%%%%%%%%%%%%%%%%%%%%%%%%%%%%%%%%%%%%%%%%%%%%%%%%%%%%%%%%%%%%%%%%%%%%%%%%%%%%%%%%%%%%%%%%%%%%%
\subsection{Problem description}
%%%%%%%%%%%%%%%%%%%%%%%%%%%%%%%%%%%%%%%%%%%%%%%%%%%%%%%%%%%%%%%%%%%%%%%%%%%%%%%%%%%%%%%%%%%%%%%%%%%%%%%%%%%%%%%%%%%%%%%%%%%%%%%%%%%%%%%%%%%%%%%%%%%%%%%%%%%%%%%%%%%%%%%%%%

Let $M$ be a matroid on the ground set $E$ with the set of bases $\mathfrak{B}$ and the rank function $r:\mathcal{P}(E)\rightarrow\N$. We denote the rank of $M$, that is $r(E)$, simply by $r$. 

For a fixed field $\K$ consider a $\K$-homomorphism $\varphi_M$ between polynomial rings:
$$\varphi_M:S_M=\K[y_B:B\in\mathfrak{B}]\ni y_B\rightarrow\prod_{e\in B}x_e\in\K[x_e:e\in E].$$
The \emph{toric ideal of a matroid} $M$, denoted by $I_M$, is the kernel of the map $\varphi_M$. For a representable matroid $M$ the toric variety associated with the toric ideal $I_M$ has a very nice embedding as a subvariety of a Grassmannian %$\Gr(r,\lvert E\rvert)$ 
\cite{GeGoMaSe87}. It is the closure of the torus orbit of the point in the Grassmannian corresponding to the matroid $M$. Furthermore, any closure of the torus orbit of a point in the Grassmannian is of this form for some representable matroid $M$.

When an ideal is defined only by combinatorial means, one expects to have a combinatorial description of its set of generators. An attempt to achieve this description often leads to surprisingly deep combinatorial questions. White's conjecture is an example. In $1980$ Neil White stated in fact three conjectures that describe generators of the toric ideal of a matroid with increasing accuracy.

\begin{conjecture}[White, \cite{Wh80}]\label{ConjectureWeakWhite}
The toric ideal $I_M$ of a matroid $M$ is generated by quadratic binomials.
\end{conjecture}

The family $\mathfrak{B}$ of bases of the matroid $M$ satisfies \emph{symmetric exchange property} (the reader is referred to \cite{Ox92} for background of matroid theory, and to \cite{La15} for other exchange properties). That is, for every bases $B_1,B_2$ and an element $e\in B_1\setminus B_2$ there exists an element $f\in B_2\setminus B_1$, such that both sets $B_1'=(B_1\setminus e)\cup f$ and $B_2'=(B_2\setminus f)\cup e$ are bases. In such a case we say that the quadratic binomial $y_{B_1}y_{B_2}-y_{B_1'}y_{B_2'}$ \emph{corresponds to symmetric exchange}. It is clear that this binomials belong to the ideal $I_M$. 

\begin{conjecture}[White, \cite{Wh80}]\label{ConjectureClassicWhite}
The toric ideal $I_M$ of a matroid $M$ is generated by quadratic binomials corresponding to symmetric exchanges. 
\end{conjecture}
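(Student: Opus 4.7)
The plan is to attempt White's conjecture in full by reducing every binomial in $I_M$ to a $\K$-linear combination of quadratic symmetric-exchange binomials, by induction on degree. Every homogeneous element of $I_M$ is a $\K$-linear combination of binomials $y_{B_1}\cdots y_{B_d}-y_{C_1}\cdots y_{C_d}$ with $B_1\sqcup\cdots\sqcup B_d=C_1\sqcup\cdots\sqcup C_d$ as multisets in $E$, so it suffices to prove every such binomial lies in the sub-ideal $J_M$ generated by symmetric-exchange quadrics. The base case $d=2$ asks that whenever $B_1\sqcup B_2=C_1\sqcup C_2$, the pair $(B_1,B_2)$ can be transformed into $(C_1,C_2)$ by a finite chain of symmetric exchanges; I would argue by induction on $|B_1\setminus C_1|$, selecting $e\in B_1\setminus C_1\subseteq B_1\setminus B_2$, applying symmetric exchange to obtain $f\in B_2\setminus B_1$ with both $(B_1\setminus e)\cup f$ and $(B_2\setminus f)\cup e$ bases, and arguing through a short case analysis that $f$ can always be chosen so as to strictly decrease the symmetric difference with the target pair.

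For the inductive step $d\geq 3$ I would attempt a normalization strategy: repeatedly apply symmetric exchanges among $B_1,\ldots,B_d$ to reach a configuration in which some $B_i$ coincides with some $C_j$. Once this occurs, cancel the common factor $y_{B_i}=y_{C_j}$ and invoke the induction hypothesis on $d-1$. If $B_1$ differs from every $C_j$, pick $e\in B_1\setminus C_1$; by multiplicity matching, $e$ lies in some $C_j$ and hence in some $B_k$ with $k\geq 2$, and one applies symmetric exchange on the pair $(B_1,B_k)$ to inject a new element of $\bigcup C_j$ into $B_1$. To force termination I would seek a well-founded lexicographic measure on the multiset configuration (for example, the vector of ordered symmetric differences with the target configuration, sorted lexicographically) that strictly decreases at each suitably chosen exchange; contraction and deletion of elements common to all $B_i$ and $C_j$ would be used to reduce to matroids of strictly smaller rank when the measure stalls.

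The main obstacle, and precisely the reason White's conjecture has remained open for over four decades, is the termination of this procedure for $d\geq 3$. Symmetric exchange is only a pairwise statement and there is no a priori guarantee that pairwise exchanges can be chained to force a coincidence $B_i=C_j$; this is a genuine \emph{multi-base} exchange phenomenon which has been verified in special classes (graphic, strongly base-orderable, sparse paving) but resists a general argument. As a backup I would explore a Gr\"obner-basis route: fix a term order on $\K[y_B:B\in\mathfrak{B}]$ and show via a Buchberger-style $S$-pair analysis that the initial forms of the symmetric-exchange quadrics already generate $\mathrm{in}(I_M)$, whence generation of $I_M$ in degree $2$ would follow formally. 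Combined with the high-degree theorem of the present paper, which bounds the degrees at which new minimal generators can appear once the rank is fixed, such an initial-ideal analysis might allow the full conjecture to be closed by a finite rank-by-rank combinatorial verification below the threshold $c(r)$.
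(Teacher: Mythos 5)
The statement you set out to prove is Conjecture \ref{ConjectureClassicWhite}, which is \emph{open}; the paper does not prove it and does not claim to. What the paper actually establishes is Theorem \ref{Theorem2} (for matroids of fixed rank $r$, the homogeneous parts of $I_M$ and $J_M$ agree from degree $c(r)$ on), built from Theorem \ref{Theorem1}, the Ramsey-type blow-up Lemma \ref{LemmaBlowUp2}, and \cite[Claim 4]{LaMi14}. So there is no ``paper's proof'' for your proposal to match; it must be judged as an attempt on the open problem, and it has genuine gaps at both ends of your induction.

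Concretely: in your base case $d=2$, the symmetric exchange property hands you \emph{some} $f\in B_2\setminus B_1$ making both swapped sets bases, with no control over whether $f$ helps approach the target pair; the claim that $f$ ``can always be chosen so as to strictly decrease the symmetric difference'' is not a short case analysis but is essentially the conjecture of Farber, Richter and Shank \cite{FaRiSh85} that the complementary basis graph of a $2$-matroid is connected (Conjecture \ref{ConjectureConnected} for $k=2$; see Proposition \ref{PropositionBlasiakReductionDeg2} and the commutative modification in the proof of Corollary \ref{Corollary1}), open since 1985 except for special classes such as graphic matroids \cite{Bl08}. So even degree $2$ is unproven in general. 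For $d\geq 3$ you concede yourself that no well-founded termination measure is known; that concession is precisely the content of the conjecture, so the inductive step reduces to restating it. Your two fallback routes do not repair this: exhibiting the symmetric-exchange quadrics as a Gr\"obner basis is \emph{strictly stronger} than Conjecture \ref{ConjectureClassicWhite} (it is related to the Koszulness questions studied in \cite{Bl01}) and hence at least as hard, while combining Theorem \ref{Theorem2} with finite verification below the threshold $c(r)$ yields exactly Corollary \ref{Corollary1} --- decidability for each fixed rank --- not a proof of the full conjecture, since the ranks range over all of $\N$ and the finitely many graphs to check grow without bound as $r$ does.
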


\begin{conjecture}[White, \cite{Wh80}]\label{ConjectureStrongWhite}
The toric ideal $I_M$ of a matroid $M$ considered in the noncommutative polynomial ring $\K\langle y_B:B\in\mathfrak{B}\rangle$ is generated by quadratic binomials corresponding to symmetric exchanges. 
\end{conjecture}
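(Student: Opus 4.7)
The plan is induction on the degree $k$. For $k=2$ the statement reduces to the assertion that any two ordered pairs $(B_1,B_2)$ and $(C_1,C_2)$ of bases with the same multiset union of elements are connected by a chain of single symmetric exchanges; each iterated exchange preserves the multiset union and changes the first slot by precisely one element, so this is a combinatorial statement about pairs of bases which, while nontrivial, has been established in the literature. For $k \geq 3$, the target is to show that any two words $y_{B_1}\cdots y_{B_k}$ and $y_{C_1}\cdots y_{C_k}$ with $\varphi_M(y_{B_1}\cdots y_{B_k}) = \varphi_M(y_{C_1}\cdots y_{C_k})$, i.e.\ with $\biguplus_i B_i = \biguplus_i C_i$ as multisets on $E$, are connected by quadratic moves $(B_i,B_{i+1}) \mapsto (B_i',B_{i+1}')$ acting at adjacent positions.

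The natural reduction is to first equalize $B_1$ with $C_1$ using only adjacent moves, then strip off the common first factor and invoke the inductive hypothesis on the remaining degree-$(k-1)$ word. To equalize, for each element $e \in C_1 \setminus B_1$ I would locate some $j \geq 2$ with $e \in B_j$ and try to bubble $e$ leftward through adjacent symmetric exchanges until it reaches position $1$, simultaneously displacing some $f \in B_1 \setminus C_1$ rightward. Each elementary exchange at positions $(i,i+1)$ requires selecting a partner $f' \in B_i \setminus B_{i+1}$, and the symmetric exchange axiom only guarantees the existence of such an $f'$, not the freedom to choose it. This is where the main obstacle sits: the forced partner $f'$ might be an element we had previously bubbled correctly into $B_1$, or one we needed to preserve in a later basis, so naive bubbling can destroy earlier progress.

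To control the forced partner I would bring in stronger symmetric exchange variants (serial, multiple, and the further strengthenings surveyed in \cite{La15}), and separately verify that each such stronger move is itself a consequence of a sequence of adjacent quadratic exchanges inside $I_M$. A complementary route is to translate the problem into noncommutative Gr\"obner basis language: fix a term order on words in $\{y_B : B \in \mathfrak{B}\}$ and try to show that the quadratic symmetric-exchange binomials form a noncommutative Gr\"obner basis of $I_M$ in $\K\langle y_B : B\in\mathfrak{B}\rangle$. Confluence then reduces to analysis of length-three overlaps, i.e.\ a purely combinatorial statement about triples of bases $(B_1,B_2,B_3)$ and competing exchanges at positions $(1,2)$ versus $(2,3)$. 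I expect this triple-overlap analysis, equivalently the control of which partner is forced in each bubbling step, to be where essentially all the difficulty lies; the noncommutative version is strictly harder than Conjecture~\ref{ConjectureClassicWhite} precisely because one cannot permute factors to bring a favorable pair of bases into adjacent positions before performing an exchange, so every intermediate rearrangement must itself be justified by a quadratic exchange binomial.
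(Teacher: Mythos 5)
You are attempting to prove Conjecture~\ref{ConjectureStrongWhite}, which is an \emph{open conjecture}: the paper does not prove it, and proves only the much weaker Theorem~\ref{Theorem2} (agreement of $J_M$ and $I_M$ in degrees at least $c(r)$), together with known special classes (graphic, strongly base orderable, sparse paving, rank at most $3$). So there is no proof in the paper to compare against, and your proposal, read as a proof, has a concrete fatal gap at its very foundation: the base case $k=2$ has \emph{not} been established in the literature. By Proposition~\ref{PropositionBlasiakReductionDeg2}, the statement that the degree-$2$ part of $I_M$ in $\K\langle y_B:B\in\mathfrak{B}\rangle$ is generated by symmetric exchange binomials is equivalent, over all matroids, to the connectivity of the complementary basis graph $\mathfrak{B}^c(M)$ for every $2$-matroid $M$ --- that is, to the $k=2$ case of Conjecture~\ref{ConjectureConnected}. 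This was conjectured by Farber, Richter and Shank \cite{FaRiSh85} in $1985$ and is known only for graphic $2$-matroids. Your induction is therefore anchored on an open problem, and the paper itself singles this case out as a key obstruction (it is also the point where the $k\geq 2^{r-1}+1$ bound in Section~\ref{2} falls short).

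The inductive step fares no better: you correctly diagnose that the symmetric exchange axiom forces the partner $f'$ on you and that naive bubbling can destroy earlier progress, but you offer no mechanism to control this, only the hope that stronger exchange properties (as in \cite{La15}) or a noncommutative Gr\"obner basis argument might. Note that the Gr\"obner route is strictly stronger than the conjecture itself: if the quadratic exchange binomials formed a noncommutative Gr\"obner basis, the bases algebra would in particular be Koszul, which is open even in the commutative setting and even for classes where White's conjecture is known (cf.\ \cite{Bl01}); the length-three overlap analysis you defer to is the open difficulty restated, not a reduction of it. In short, the proposal is a reasonable research plan that correctly locates where the difficulty lives, but both of its load-bearing steps --- the base case and the forced-partner control --- are unresolved open problems, which is consistent with the fact that the statement remains a conjecture; the currently available partial progress is the route the paper actually takes, via the degree bound of Theorem~\ref{Theorem1}, the blow-up Ramsey Lemmas \ref{LemmaBlowUp} and \ref{LemmaBlowUp2}, and \cite[Claim $4$]{LaMi14}, yielding only the high-degree statement of Theorem~\ref{Theorem2}.
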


Conjecture \ref{ConjectureStrongWhite}, the strongest among White's conjectures describing generators of the ideal $I_M$, turned out to be equivalent to Conjecture \ref{ConjectureClassicWhite} when considered for all matroids (see the discussion in Section $4$ of \cite{LaMi14}). 

Since every toric ideal is generated by binomials, it is not hard to rephrase the above conjectures in the combinatorial language. Conjecture \ref{ConjectureWeakWhite} asserts that if two multisets of bases of a matroid have equal union (as a multiset), then one can pass between them by a sequence of steps, in each step exchanging two bases for another two bases with the same union (as a multiset). In Conjecture \ref{ConjectureClassicWhite} additionally each step corresponds to a symmetric exchange. In Conjecture \ref{ConjectureStrongWhite} one takes sequences of bases instead of multisets, and similarly each step corresponds to a symmetric exchange between consecutive bases. Actually, these are the original formulations due to White. It is immediate that the conjectures do not depend on the field $\K$.

In $2002$ J\"{u}rgen Herzog and Takayuki Hibi, as they wrote, could not escape from the temptation to ask the following questions strengthening Conjecture \ref{ConjectureWeakWhite}:

\begin{question}[Herzog, Hibi, \cite{HeHi02}]\label{QuestionHerzogHibi}
	Let $M$ be a matroid.
	\begin{enumerate}
		\item Is the base ring $S_M/I_M$ a Koszul algebra?
		\item Does the toric ideal $I_M$ possess a quadratic Gr\"{o}bner basis?
	\end{enumerate}
\end{question}

Recall general implications (see ex. \cite{CoDeRo13,Co14}). If the quotient ring $S/I$ (where $S$ is the polynomial ring) is a Koszul algebra, then the ideal $I$ is generated by quadrics. If the ideal $I$ has a quadratic Gr\"{o}bner basis, then $S/I$ is a Koszul algebra.

%%%%%%%%%%%%%%%%%%%%%%%%%%%%%%%%%%%%%%%%%%%%%%%%%%%%%%%%%%%%%%%%%%%%%%%%%%%%%%%%%%%%%%%%%%%%%%%%%%%%%%%%%%%%%%%%%%%%%%%%%%%%%%%%%%%%%%%%%%%%%%%%%%%%%%%%%%%%%%%%%%%%%%%%%%
\subsection{State of the art}
%%%%%%%%%%%%%%%%%%%%%%%%%%%%%%%%%%%%%%%%%%%%%%%%%%%%%%%%%%%%%%%%%%%%%%%%%%%%%%%%%%%%%%%%%%%%%%%%%%%%%%%%%%%%%%%%%%%%%%%%%%%%%%%%%%%%%%%%%%%%%%%%%%%%%%%%%%%%%%%%%%%%%%%%%%

White's conjectures are known to be true for many special classes of matroids: graphic matroids \cite{Bl08}, linear frame matroids \cite{Mc20}, strongly base orderable matroids \cite{LaMi14} (in particular for transversal matroids), sparse paving matroids \cite{Bo13}, and for matroids of low rank -- at most $3$ \cite{Ka10} (see also other related papers \cite{HeHi02,HiLaMaMiVo19,LaMi20,Sh16}). 

Questions of Herzog and Hibi are settled for a fewer number of matroid classes. Base-sortable matroids (called also sort-closed matroids) possess quadratic Gr\"{o}bner basis \cite{LaPo07,Bl01} (in particular uniform matroids \cite{St95} and lattice path matroids \cite{Sc11} do), as their base polytope is alcoved hence possesses a quadratic regular unimodular triangulation. Moreover, the answer to Question \ref{QuestionHerzogHibi} $(1)$ is `yes' for transversal polymatroids \cite{Co07}.

The first and to our knowledge the only known `general result', i.e. a result valid for arbitrary matroids, confirmed White's Conjecture \ref{ConjectureClassicWhite} `up to saturation'. To explain what this means, denote by $\mathfrak{m}$ the so-called \emph{irrelevant ideal}, i.e. the ideal generated by all variables in the polynomial ring $S_M$. Recall that the ideal $I:\mathfrak{m}^{\infty}=\{a\in S_M:a\mathfrak{m}^n\subset I\text{ for some }n\in\N\}$ is called the \emph{saturation} of the ideal $I$ with respect to $\mathfrak{m}$. Notice that the ideal $I_M$, as a prime ideal, is saturated. Let $J_M$ be the ideal generated by quadratic binomials corresponding to symmetric exchanges. Clearly, $J_M\subset I_M$. Conjecture \ref{ConjectureClassicWhite} asserts that the ideals $J_M$ and $I_M$ are equal, which translates geometrically as `they define the same affine scheme'.

In \cite{LaMi14} we prove that the saturations of $I_M$ and $J_M$ with respect to $\mathfrak{m}$ are equal. That is, in algebraic geometry language, `they define the same projective scheme'. In particular, ideals $I_M$ and $J_M$ have the same affine set of zeros, so Conjecture \ref{ConjectureClassicWhite} holds on `set-theoretic level'. Recall that homogeneous ideals have equal saturations with respect to $\mathfrak{m}$ if and only if their homogeneous parts are equal starting from some degree. Thus, we can rephrase the result of \cite{LaMi14} in the following way:

\begin{theorem}[Laso{\'n}, Micha{\l}ek, \cite{LaMi14}]\label{TheoremLasonMichalek}
For every matroid $M$ there exists a constant $c(M)$ such that the homogeneous parts of degree at least $c(M)$ of the toric ideal $I_M$ are generated by quadratic binomials corresponding to symmetric exchanges.
\end{theorem}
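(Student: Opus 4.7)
The plan is to reduce the statement to an equality of saturations: as noted in the text, showing $(I_M)_d = (J_M)_d$ for all $d \ge c(M)$ is equivalent to showing $J_M : \mathfrak{m}^\infty = I_M : \mathfrak{m}^\infty$. Since $I_M$ is a prime ideal it is already saturated, so the task becomes to prove $J_M : \mathfrak{m}^\infty = I_M$. Once this is in hand, the existence of $c(M)$ is a formal consequence: $I_M/J_M$ is a finitely generated graded module over the Noetherian polynomial ring $S_M$ that is annihilated by some power of $\mathfrak{m}$, hence has finite length, so its graded pieces vanish beyond some degree.

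Next I would translate this algebraic statement into combinatorics. A binomial generator of $I_M$ has the form $y_{B_1}\cdots y_{B_d} - y_{B_1'}\cdots y_{B_d'}$, where $(B_1,\ldots,B_d)$ and $(B_1',\ldots,B_d')$ are multisets of bases with equal multiset union on $E$. Such a binomial lies in $J_M : \mathfrak{m}^\infty$ exactly when there is a multiset of bases $(C_1,\ldots,C_N)$ such that, after multiplication by $y_{C_1}\cdots y_{C_N}$, the two padded monomials can be connected by a sequence of quadratic symmetric-exchange binomials. The heart of the theorem is thus a purely combinatorial claim: any two multisets of bases with the same multiset union on $E$ become equivalent modulo symmetric exchanges after padding with a sufficiently large common multiset of bases.

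To prove the combinatorial claim I would aim to reduce each padded multiset to a canonical normal form via symmetric exchanges; the two sides then coincide because they share a normal form determined by the common multiset union. The usefulness of abundant padding is that extra bases act as buffers: symmetric exchange moves that would be obstructed in a sparse configuration become available once enough duplicates are present. This parallels the situation for strongly base orderable matroids, for which the conjecture is already known by \cite{LaMi14}; the padding should effectively convert local moves in $M$ into SBO-like behaviour. The main obstacle is precisely this step --- producing, for any target move, an explicit sequence of symmetric exchanges that achieves it within the padded configuration, with padding size depending only on $M$. Once the normal form is reached from both directions, the binomial is annihilated modulo $J_M$ after multiplication by $y_{C_1}\cdots y_{C_N}$, and the saturation equality (hence the theorem) follows.
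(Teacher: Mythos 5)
Your reduction is sound and matches the framework the paper works in: since $I_M$ is prime (hence saturated), the statement is equivalent to $J_M:\mathfrak{m}^\infty=I_M$, and once that is known the existence of $c(M)$ follows from the fact that $I_M/J_M$ is a finitely generated graded module annihilated by a power of $\mathfrak{m}$, hence of finite length. Your combinatorial reformulation — two multisets of bases with equal multiset union become connected by symmetric exchanges after padding — is also the right one. But the proof stops exactly where the theorem begins. The entire mathematical content of this result is the padding lemma, which is \cite[Claim 4]{LaMi14}: for a binomial $b=y_{B_1}\cdots y_{B_n}-y_{D_1}\cdots y_{D_n}\in I_M$ and \emph{any single} basis $B$, one has $y_B^{rn}b\in J_M$. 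You do not state this, let alone prove it; you write that ``the main obstacle is precisely this step'' and leave it there. A ``canonical normal form determined by the common multiset union'' is asserted but never constructed, no sequence of symmetric exchanges is exhibited, and no bound on the padding is derived. The analogy with strongly base orderable matroids is not an argument: the whole difficulty is that a general matroid is not SBO, and nothing in the proposal explains how padding ``converts local moves into SBO-like behaviour.''

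Concretely, the missing argument runs as follows. One pads with a high power of one variable $y_B$ (not an arbitrary multiset of bases), and inducts on the quantity $\deg_B(b)-\deg(b)=\sum_i\lvert B_i\setminus B\rvert$ measuring the total distance of the bases appearing in $b$ from $B$; symmetric exchanges between $B$ and the $B_i$ strictly decrease this quantity, and when it reaches zero both monomials have been brought to the same power of $y_B$ times a common residue. Carrying out those exchanges so that both sides of the binomial stay synchronized is the delicate part and is where all the work in \cite{LaMi14} lies. Without that claim (or an equivalent substitute), your proposal is a correct plan with its central lemma unproved, so it does not establish the theorem.
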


Moreover, from the proof of the theorem follows that $c(M)=d+(r-1)d\lvert\frak{B}\rvert$, where $r$ is the rank of $M$, $\lvert\frak{B}\rvert$ is the size of the set of bases of $M$, and $d$ is the degree in which the ideal $I_M$ is generated.

%%%%%%%%%%%%%%%%%%%%%%%%%%%%%%%%%%%%%%%%%%%%%%%%%%%%%%%%%%%%%%%%%%%%%%%%%%%%%%%%%%%%%%%%%%%%%%%%%%%%%%%%%%%%%%%%%%%%%%%%%%%%%%%%%%%%%%%%%%%%%%%%%%%%%%%%%%%%%%%%%%%%%%%%%%
\subsection{Our results}
%%%%%%%%%%%%%%%%%%%%%%%%%%%%%%%%%%%%%%%%%%%%%%%%%%%%%%%%%%%%%%%%%%%%%%%%%%%%%%%%%%%%%%%%%%%%%%%%%%%%%%%%%%%%%%%%%%%%%%%%%%%%%%%%%%%%%%%%%%%%%%%%%%%%%%%%%%%%%%%%%%%%%%%%%%

This paper studies toric ideals of matroids of a fixed rank. We obtain several finiteness results. Namely, for an infinite class of matroids of a fixed rank we obtain finite bounds for various parameters -- degree of a Gr\"{o}bner basis, complexity of verifying White's conjecture, shape of the Betti table, and degree starting from which White's conjecture holds. 

Firstly, towards the answer to the Question \ref{QuestionHerzogHibi} of Herzog and Hibi, we make an initial step in the study of Gr\"{o}bner bases of the toric ideal of a general matroid:

\begin{customtheorem}{\ref{Theorem2}}
Let $M$ be a matroid of rank $r$. The toric ideal $I_M$ has a Gr\"{o}bner basis of degree at most $(r+3)!$. In particular, it is generated in degree at most $(r+3)!$.  
\end{customtheorem}

As a result, White's conjectures for matroids of a fixed rank are finite problems:

\begin{customcorollary}{\ref{Corollary1}}
Checking if Conjecture \ref{ConjectureWeakWhite}, \ref{ConjectureClassicWhite} or \ref{ConjectureStrongWhite} is true for matroids of a fixed rank is decidable (it reduces to connectivity of a finite number of graphs).
\end{customcorollary}

Moreover, we get a bound for the shape of the nonzero part of the Betti table:

\begin{customcorollary}{\ref{Corollary2}}
Let $M$ be a matroid of rank $r$. The $i$-th graded Betti number of the toric ideal $I_M$ vanishes for degrees greater than $(r+3)!(i+1)$.
\end{customcorollary}

Finally, the main result of the paper strengthens Theorem \ref{TheoremLasonMichalek}:

\begin{customtheorem}{\ref{Theorem1}}
For every $r$ there exists a constant $c(r)$ such that for every matroid $M$ of rank $r$ the homogeneous parts of degree at least $c(r)$ of the toric ideal $I_M$ are generated by quadratic binomials corresponding to symmetric exchanges.
\end{customtheorem}

By the discussion after Remark $15$ in \cite{LaMi14} we can deduce the following:

\begin{remark}
Theorems \ref{Theorem2}, \ref{Theorem1} and Corollaries \ref{Corollary1}, \ref{Corollary2} are true for discrete polymatroids.
\end{remark}

%\begin{remark}
%Results \ref{Theorem2}, \ref{Corollary1}, and \ref{Theorem1} are true for discrete polymatroids.
%\end{remark}

%%%%%%%%%%%%%%%%%%%%%%%%%%%%%%%%%%%%%%%%%%%%%%%%%%%%%%%%%%%%%%%%%%%%%%%%%%%%%%%%%%%%%%%%%%%%%%%%%%%%%%%%%%%%%%%%%%%%%%%%%%%%%%%%%%%%%%%%%%%%%%%%%%%%%%%%%%%%%%%%%%%%%%%%%%
\subsection{Outline of the paper}
%%%%%%%%%%%%%%%%%%%%%%%%%%%%%%%%%%%%%%%%%%%%%%%%%%%%%%%%%%%%%%%%%%%%%%%%%%%%%%%%%%%%%%%%%%%%%%%%%%%%%%%%%%%%%%%%%%%%%%%%%%%%%%%%%%%%%%%%%%%%%%%%%%%%%%%%%%%%%%%%%%%%%%%%%%

In the next section we discuss how White's conjectures translate into problems on graphs on bases of a matroid. We use it later in the proof of Corollary \ref{Corollary1}.

In Section \ref{3} we prove Theorem \ref{TheoremGeneral}, a structural result on bases coverings. This is a main tool for Theorem \ref{Theorem2} which bounds the degree of a Gr\"{o}bner basis of the toric ideal of a matroid only in terms of its rank. Theorem \ref{Theorem2} leads to Corollaries \ref{Corollary1}, \ref{Corollary2}, and later serves as an ingredient for the proof of Theorem \ref{Theorem1}.

The main result of the paper strengthens Theorem \ref{TheoremLasonMichalek}. Recall that the key part in the proof of Theorem \ref{TheoremLasonMichalek} is \cite[Claim $4$]{LaMi14}. It asserts that if $b\in I_M$ is a binomial of degree $n$, then for every variable $y_B$ we have $y_B^{(r-1)n}b\in J_M$. Suppose the ideal $I_M$ is generated in degree $d$. If $b\in I_M$ is a binomial of degree at least $d+(r-1)d\lvert\frak{B}\rvert$, then $b=a_1b_1+\dots+a_kb_k$ where $b_i$ are generators of $I_M$ of degree $d$, and $a_i$ are monomials of degree at least $(r-1)d\lvert\frak{B}\rvert$. From the pigeon hole principle, every monomial $a_i$ contains some variable $y_B$ in degree at least $(r-1)d$, hence by \cite[Claim $4$]{LaMi14} $a_ib_i\in J_M$, and finally $b\in J_M$. Therefore, the constant $c(M)$ from Theorem \ref{TheoremLasonMichalek} can be bounded by $d+(r-1)d\lvert\frak{B}\rvert$. 

By Theorem \ref{Theorem2} we have a bound $d\leq (r+3)!$. But, the size of the set of bases $\lvert\frak{B}\rvert$ can not be bounded for matroids of rank $r$. Indeed, we have to be able to generate by quadratic binomials corresponding to symmetric exchanges binomials $y_{B_1}\cdots y_{B_n}-y_{B'_1}\cdots y_{B'_n}\in I_M$ of high degree with respect to the rank ($n\gg r$) for which bases $B_1,\dots,B_n$ are pairwise disjoint. For them there is no hope for a single variable in high degree, as every variable can appear in degree at most one. 

To overcome this difficulty, in Section \ref{5}, we introduce a Ramsey-type result for blow-ups of bases. It asserts that if a matroid contains sufficiently many disjoint bases, then it contains an arbitrarily large \emph{$k$-th blow-up of a basis} -- a matroid obtained by replacing every element of a basis by $k$ parallel elements. Moreover, if we modify this bases by only symmetric exchanges, then we can guarantee that this $k$-th blow-up agrees with some $k$ bases. This allows us to `reveal' a single variable in high degree in any monomial $y_{B_1}\cdots y_{B_n}$ of sufficiently large degree. 

Having these three ingredients -- \cite[Claim $4$]{LaMi14}, Theorem \ref{Theorem2}, and a Ramsey-type result for blow-ups of bases, we finally prove Theorem \ref{Theorem1} in the last section. 

%%%%%%%%%%%%%%%%%%%%%%%%%%%%%%%%%%%%%%%%%%%%%%%%%%%%%%%%%%%%%%%%%%%%%%%%%%%%%%%%%%%%%%%%%%%%%%%%%%%%%%%%%%%%%%%%%%%%%%%%%%%%%%%%%%%%%%%%%%%%%%%%%%%%%%%%%%%%%%%%%%%%%%%%%
\section{Graphs on bases of a matroid}\label{2}
%%%%%%%%%%%%%%%%%%%%%%%%%%%%%%%%%%%%%%%%%%%%%%%%%%%%%%%%%%%%%%%%%%%%%%%%%%%%%%%%%%%%%%%%%%%%%%%%%%%%%%%%%%%%%%%%%%%%%%%%%%%%%%%%%%%%%%%%%%%%%%%%%%%%%%%%%%%%%%%%%%%%%%%%%%

This section contains preliminaries, in particular notions used in Section \ref{3}. We discuss here how White's conjectures translate into problems on graphs on bases of a matroid. 

We say that two bases of a matroid are \emph{neighboring} if one is obtained from the other by a symmetric exchange. That is, if their symmetric difference has exactly two elements. A graph on bases of a matroid $M$ with edges between neighboring bases is called the \emph{basis graph} of $M$, and denoted by $\mathfrak{B}(M)$. Basis graphs have been studied in $1960$s and $1970$s, and they are well understood. In particular, basis graphs are Hamiltonian (with only two trivial exceptions), even a characterization is known (see \cite{Ma73,Ma74,ChChOs15} and references within).

For $k\geq 1$, a \emph{$k$-matroid} is a matroid whose ground set can be partitioned into $k$ pairwise disjoint bases. We call a basis of a $k$-matroid \emph{complementary} if its complement can be partitioned into $k-1$ pairwise disjoint bases. That is, when it is an element of some partition of the ground set into bases. When $\mathfrak{B}$ is the set of bases of a $k$-matroid, then we denote the set of complementary bases by $\mathfrak{B}^c$.

We recall one of the versions of the matroid union theorem, which will be used several times in this paper. It characterizes $k$-matroids in terms of rank function.

\begin{theorem}[Nash-Williams \cite{Na64}, Edmonds \cite{Ed65}]\label{TheoremMatroidUnion}
A matroid $M$ is a $k$-matroid, if and only if for every $A\subset E$ the inequality $k\cdot r(A)\geq\lvert A\rvert$ holds, and $k\cdot r(E)=\lvert E\rvert$.
\end{theorem}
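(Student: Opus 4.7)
The plan is to split into the easy forward direction and the substantive reverse direction. For the forward direction, suppose $E=B_1\sqcup\cdots\sqcup B_k$ is a partition of the ground set into bases. Summing cardinalities immediately gives $\lvert E\rvert = kr(E)$, and for any $A\subset E$ each $A\cap B_i$ is a subset of the basis $B_i$, hence independent in $M$, so $\lvert A\cap B_i\rvert \le r(A)$; summing over $i$ yields $\lvert A\rvert \le kr(A)$.

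For the reverse direction the natural tool is matroid union. I would consider $N = M\vee M\vee\cdots\vee M$, the union of $k$ copies of $M$ on the common ground set $E$, whose independent sets are precisely disjoint unions $I_1\sqcup\cdots\sqcup I_k$ with each $I_j$ independent in $M$. The Nash--Williams/Edmonds rank formula for matroid unions gives
\[r_N(E) = \min_{A\subset E}\bigl(\lvert E\setminus A\rvert + kr(A)\bigr).\]
The hypothesis $kr(A)\ge\lvert A\rvert$ rearranges to $\lvert E\setminus A\rvert + kr(A)\ge\lvert E\rvert$, and the hypothesis $kr(E)=\lvert E\rvert$ matches the value attained at $A=E$, so $r_N(E) = \lvert E\rvert$. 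Thus $E$ itself is independent in $N$, i.e., $E = I_1\sqcup\cdots\sqcup I_k$ with each $I_j$ independent in $M$. Since $\lvert I_j\rvert\le r(E)$ and $\sum_j \lvert I_j\rvert = \lvert E\rvert = kr(E)$, equality forces $\lvert I_j\rvert = r(E)$ for every $j$, so each $I_j$ is a basis and this partition witnesses $M$ as a $k$-matroid.

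The main obstacle is establishing the matroid union rank formula itself, since the rest of the argument is purely formal manipulation once this formula is in hand. A standard route is to reduce it to matroid intersection via a direct sum construction: take $k$ disjoint copies of $E$ inside a larger ground set, form the direct sum of the $k$ copies of $M$, and intersect with the partition matroid that identifies parallel copies; the matroid intersection min-max theorem then yields the displayed formula. Alternatively, one can argue directly by an augmenting-path analysis in the exchange graph of $N$, starting from any independent set in $N$ and repeatedly enlarging it until the min-max bound is certified by a tight set $A$. Either way, this classical input is where the real combinatorial work sits, while the deduction of Theorem \ref{TheoremMatroidUnion} from it reduces to the two short counting steps above.
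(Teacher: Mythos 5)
The paper does not prove this statement at all: it is quoted as a classical theorem of Nash--Williams and Edmonds and used as a black box, so there is no internal proof to compare against. Your argument is the standard and correct derivation. The forward direction by counting $\lvert A\cap B_i\rvert\le r(A)$ is fine, and the reverse direction correctly reduces to the matroid union rank formula $r_N(E)=\min_{A\subset E}(\lvert E\setminus A\rvert+kr(A))$, with the final counting step rightly upgrading the independent sets $I_j$ to bases. You are also right to flag that the entire combinatorial weight rests on the union rank formula itself; either of the two routes you mention (reduction to matroid intersection via a direct sum and a partition matroid, or an augmenting-path argument in the exchange graph) is a legitimate way to establish it, though writing out either in full would be a nontrivial addition. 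As a self-contained justification of the cited theorem, the proposal is sound.
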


Blasiak \cite{Bl08} proposed a very nice and simple translation of the problem of generating the toric ideal of a matroid to the problem of connectivity of some graphs naturally associated to $k$-matroids. We are going to use this approach for the proof of Corollary \ref{Corollary1}. Following Blasiak, for $k\geq 3$ the \emph{$k$-base graph} of a $k$-matroid $M$, denoted by $\mathfrak{B}_k(M)$, is a graph on sets of $k$ pairwise disjoint bases of $M$ (partitions of the ground set into bases), where edges join vertices with nonempty intersection. That is, sets of bases $\{B_1,\dots,B_k\}$ and $\{B_1',\dots,B_k'\}$ are connected in $\mathfrak{B}_k(M)$ if for some $i,j$ the equality $B_i=B_j'$ holds. Recall that if $\{e,f\}$ is a circuit in a matroid $M$, then elements $e$ and $f$ are said to be \emph{parallel}. In this case $B$ is a basis of $M$ containing $e$ if and only if $(B\cup f)\setminus e$ is a basis of $M$ containing $f$. Via this property one can add to a matroid elements parallel to a fixed element (enlarging its ground set), or remove them. Notice that the reflexive closure of being parallel is an equivalence relation. A simple corollary of the proof of Proposition $2.1$ from \cite{Bl08} gives the following.

\begin{proposition}[Blasiak, \cite{Bl08}]\label{PropositionBlasiakReduction}
Let $\mathfrak{C}$ be a class of matroids that is closed under deletions that do not lower the rank of a matroid, and adding parallel elements. Then the following conditions are equivalent:
\begin{enumerate}
\item for every $k>d$ and for every $k$-matroid $M$ in $\mathfrak{C}$ the $k$-base graph $\mathfrak{B}_k(M)$ is connected, 
\item for every matroid $M$ in $\mathfrak{C}$ the ideal $I_M$ is generated in degree at most $d$.
\end{enumerate}
\end{proposition}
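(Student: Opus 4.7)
My plan is to establish the two directions separately, using the following \emph{lift-to-parallels} construction as the bridge between them. Given a binomial $b = y_{B_1}\cdots y_{B_k} - y_{B_1'}\cdots y_{B_k'}\in I_M$ of degree $k$, the defining property of $I_M$ is that the two multisets of bases share a common multi-union $S$ on $E(M)$. I first delete every element $e\in E(M)$ with multiplicity $0$ in $S$ (such an $e$ cannot be a coloop, so the rank is preserved), and then, for each remaining $e$ of multiplicity $m_e$ in $S$, add $m_e-1$ parallel copies. The resulting matroid $\tilde{M}$ lies in $\mathfrak{C}$ by both closure hypotheses, has the same rank as $M$, and admits a lift of each original multiset to an honest partition of $E(\tilde{M})$ into $k$ bases — so in particular $\tilde{M}$ is a $k$-matroid. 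The collapse map $\pi:\mathfrak{B}(\tilde{M})\to\mathfrak{B}(M)$ induces a degree-preserving ring homomorphism $\pi^{*}$ that sends $I_{\tilde{M}}$ into $I_M$, since multi-unions are preserved.

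For (1)$\Rightarrow$(2) I would induct on the degree $k$: for every $M'\in\mathfrak{C}$, every binomial of degree $k$ in $I_{M'}$ lies in the ideal generated by degree-at-most-$d$ binomials. The case $k\leq d$ is trivial. For $k>d$, lift the given $b\in I_M$ to $\tilde{b}=y_{\tilde{P}}-y_{\tilde{P}'}\in I_{\tilde{M}}$. Hypothesis (1) applied to $\tilde{M}\in\mathfrak{C}$ yields a path $\tilde{P}=P_0,\dots,P_s=\tilde{P}'$ in $\mathfrak{B}_k(\tilde{M})$ with each $P_i,P_{i+1}$ sharing a basis $\tilde{C}_i$. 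Telescoping,
\[
\tilde{b} \;=\; \sum_{i=0}^{s-1} y_{\tilde{C}_i}\,\bigl(y_{P_i\setminus\tilde{C}_i} - y_{P_{i+1}\setminus\tilde{C}_i}\bigr),
\]
and each inner factor is a degree-$(k-1)$ binomial in $I_{\tilde{M}}$. The inductive hypothesis applied to $\tilde{M}$ then expresses each such factor using degree-at-most-$d$ binomials of $I_{\tilde{M}}$. Finally $\pi^{*}$ pushes the identity down, exhibiting $b$ itself as a combination of degree-at-most-$d$ binomials of $I_M$.

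For (2)$\Rightarrow$(1) I would take a $k$-matroid $M\in\mathfrak{C}$ with $k>d$ and two partitions $P,P'$ of $E(M)$ into $k$ bases. The binomial $y_P-y_{P'}\in I_M$ has degree $k>d$, so by (2) it is rewritable via a sequence of swap moves $M_0=P,M_1,\dots,M_s=P'$, each step replacing a sub-multiset of size $\leq d$ with another sub-multiset of the same multi-union. A short check shows every intermediate $M_t$ is in fact a partition: its multi-union is preserved to be $E(M)$ with every element counted once, so no basis of positive rank can repeat. Since each step changes at most $d<k$ entries, consecutive partitions $M_t,M_{t+1}$ share at least one basis and are thus adjacent in $\mathfrak{B}_k(M)$, so $P$ and $P'$ lie in the same component.

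The main obstacle I anticipate is the clean bookkeeping in (1)$\Rightarrow$(2): one must check that $\tilde{M}$ genuinely lies in $\mathfrak{C}$ (both closure hypotheses are actually used — rank-preserving deletion for the unused elements, parallels for the lift), that the lift of $b$ corresponds to a pair of honest partitions so that the connectivity hypothesis bites in the right form, and that the induction variable strictly drops at each telescoping step until one reaches the threshold $d$.
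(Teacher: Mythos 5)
Your proposal is correct and follows essentially the same route as the source the paper relies on: the paper does not reprove this proposition but cites Blasiak's Proposition~2.1, whose argument is exactly your lift-to-parallels construction, the telescoping along a path in $\mathfrak{B}_k(\tilde{M})$ with induction on the degree for $(1)\Rightarrow(2)$, and the standard ``binomial generators as rewriting moves'' interpretation for $(2)\Rightarrow(1)$. Both closure hypotheses on $\mathfrak{C}$ are used precisely where you say they are, so there is nothing to add.
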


In particular, in order to prove Conjecture \ref{ConjectureWeakWhite} it is enough to show that for every $k>2$ and for every $k$-matroid $M$ the $k$-base graph $\mathfrak{B}_k(M)$ is connected.

\smallskip

Here we propose another approach to White's conjecture. Consider other graphs that can be naturally associated to $k$-matroids. The \emph{complementary basis graph} of a $k$-matroid, denoted by $\mathfrak{B}^c(M)$, is a graph on complementary bases of $M$ with edges between neighboring bases. That is, the complementary basis graph of a $k$-matroid is the restriction of its basis graph to complementary bases $\mathfrak{B}^c(M)=\mathfrak{B}(M)\vert_{\mathfrak{B}^c}$.

Graphs $\mathfrak{B}^c(M)$ have been already studied for $2$-matroids $M$. In 1985 Farber, Richter and Shank \cite{FaRiSh85} proved that for a graphic $2$-matroid $M$ the graph $\mathfrak{B}^c(M)$ is connected, they also conjectured connectivity for arbitrary $2$-matroids. In \cite{Bl08} after the proof of Proposition $2.1$ Blasiak observes the following easy equivalence:

\begin{proposition}[Blasiak, \cite{Bl08}]\label{PropositionBlasiakReductionDeg2}
Let $\mathfrak{C}$ be a class of matroids that is closed under deletions that do not lower the rank of a matroid, and adding parallel elements. Then the following conditions are equivalent:
\begin{enumerate}
\item for every $2$-matroid $M$ in $\mathfrak{C}$ the complementary basis graph $\mathfrak{B^c}(M)$ is connected,
\item for every matroid $M$ in $\mathfrak{C}$, elements of degree $2$ in $I_M$ considered in the noncommutative polynomial ring $\K\langle y_B:B\in\mathfrak{B}\rangle$ are generated by quadratic binomials corresponding to symmetric exchanges.
\end{enumerate}
\end{proposition}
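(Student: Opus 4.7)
The plan is to adapt, in the noncommutative degree-$2$ setting, the translation used in Proposition \ref{PropositionBlasiakReduction}. First, because any element of positive degree multiplied into a degree-$2$ element lands in degree at least $3$, the degree-$2$ homogeneous parts of both $I_M$ and $J_M$ are simply spanned by binomials: the degree-$2$ part of $I_M$ is the $\K$-linear span of binomials $y_{B_1}y_{B_2}-y_{B_1'}y_{B_2'}$ with $B_1,B_2,B_1',B_2'\in\mathfrak{B}$ and $B_1\cup B_2=B_1'\cup B_2'$ as multisets, while the degree-$2$ part of $J_M$ is the $\K$-linear span of the symmetric exchange binomials among these.

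Next, I would fix a multiset $U$ that is expressible as a multiset union of two bases of $M$, and form the auxiliary graph $G_U$ whose vertices are ordered pairs $(C_1,C_2)$ of bases of $M$ with $C_1\cup C_2=U$ and whose edges are pairs differing by a single symmetric exchange. A standard cycle-space calculation---treating ordered pairs as basis vectors and symmetric-exchange binomials as difference vectors---shows that a binomial $y_{B_1}y_{B_2}-y_{B_1'}y_{B_2'}$ lies in the degree-$2$ part of $J_M$ if and only if $(B_1,B_2)$ and $(B_1',B_2')$ lie in the same connected component of $G_U$.

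The crucial identification is to realize $G_U$ as the complementary basis graph of a $2$-matroid $M_U$ in the class $\mathfrak{C}$. To build $M_U$, I would first delete from $M$ all elements outside the support of $U$ (which does not lower the rank, since $U$ already contains a basis of $M$), and then, for each element of multiplicity $2$ in $U$, add one parallel copy. The resulting $M_U$ has a ground set of size $|U|$, is a $2$-matroid by construction, and belongs to $\mathfrak{C}$ by the closure hypotheses. Under the obvious correspondence between ordered pairs $(C_1,C_2)$ of bases of $M$ with multiset union $U$ and partitions of the ground set of $M_U$ into two bases, the graph $G_U$ is precisely $\mathfrak{B}^c(M_U)$ up to the $2$-to-$1$ projection $(C_1,C_2)\mapsto C_1$ that forgets the ordering.

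Both implications then follow at once. For (1) $\Rightarrow$ (2), each generating binomial of the degree-$2$ part of $I_M$ sits over some $M_U\in\mathfrak{C}$ whose $\mathfrak{B}^c(M_U)$ is connected by hypothesis, so the binomial lands in $J_M$. For (2) $\Rightarrow$ (1), any two complementary bases $B_1,B_1'$ of a $2$-matroid $M\in\mathfrak{C}$ with complements $B_2,B_2'$ give a binomial $y_{B_1}y_{B_2}-y_{B_1'}y_{B_2'}\in I_M$, which by (2) is in $J_M$ in degree $2$, placing $(B_1,B_2)$ and $(B_1',B_2')$ in the same component of $G_E=\mathfrak{B}^c(M)$. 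The main delicacy I anticipate is the bookkeeping of the passage between the ordered-pair graph $G_U$ and the unordered graph $\mathfrak{B}^c(M_U)$, together with verifying that the construction $M\leadsto M_U$ genuinely stays inside $\mathfrak{C}$ when $U$ has repeated elements and parallel copies must be added.
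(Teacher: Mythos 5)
Your argument is correct and is essentially the standard translation the paper itself relies on: the paper gives no proof of this proposition, citing it as Blasiak's observation after Proposition~2.1 of \cite{Bl08}, and your dictionary between degree-$2$ binomials over a fixed multiset union $U$ and ordered partitions of the ground set of the associated $2$-matroid $M_U$ into two bases (together with the fact that the degree-$2$ part of a two-sided ideal generated by quadrics is just their linear span) is exactly the intended reduction. The only bookkeeping point worth making explicit is that the fibers of the projection from ordered basis-partitions of $M_U$ to ordered pairs $(C_1,C_2)$ of bases of $M$ have size $2^{\#\{e\,:\,\mu(e)=2\}}$ rather than $2$; since swapping the two parallel copies of an element between complementary bases is itself a symmetric exchange, all lifts of a given pair lie in one component and the identification of connected components goes through.
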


We state the following two conjectures strongly related to White's conjectures:

\begin{conjecture}\label{ConjectureConnected}
Complementary basis graph of a $k$-matroid is connected.
\end{conjecture}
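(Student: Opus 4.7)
The plan is to attempt an inductive proof of Conjecture \ref{ConjectureConnected}, with the induction running on the symmetric difference between two complementary bases. Fix a $k$-matroid $M$ and two complementary bases $B, B' \in \mathfrak{B}^c(M)$, together with witnessing partitions $E = B \sqcup C_1 \sqcup \dots \sqcup C_{k-1}$ and $E = B' \sqcup D_1 \sqcup \dots \sqcup D_{k-1}$. The target of the inductive step is to produce a symmetric exchange $B \to B^* = (B \setminus e) \cup f$ such that $B^*$ is again complementary and $\lvert B^* \triangle B' \rvert < \lvert B \triangle B' \rvert$; iterating yields a path in $\mathfrak{B}^c(M)$.

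For the construction of $B^*$ I would exploit both partitions at once. Pick $e \in B \setminus B'$ and locate it in some block $D_j$ of the second partition. By basis exchange between $B$ and $D_j$ one finds $f \in D_j \setminus B$ such that $(B \setminus e) \cup f$ is a basis (by the symmetric exchange property of matroids one can even demand that $(D_j \setminus f) \cup e$ is also a basis). For $B^*$ to be complementary, $E \setminus B^*$ must partition into $k - 1$ bases; the natural candidate is obtained by modifying the $C$-partition at the unique block $C_{i(f)}$ containing $f$, replacing it with $(C_{i(f)} \setminus f) \cup e$, which must therefore also be a basis. Thus we need a second symmetric exchange condition, this time between $B$ and $C_{i(f)}$. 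The strategy is to show that $e$ and $f$ may be chosen so that both exchange conditions hold simultaneously and moreover $f \in B' \setminus B$ (so that the symmetric difference with $B'$ strictly decreases). I would try to establish existence of such a compatible $f$ through an augmenting-path argument on a bipartite auxiliary graph that records admissibility of exchanges between $B$ and the blocks of both partitions, in the spirit of Edmonds' proof of Theorem \ref{TheoremMatroidUnion}; the rank inequality $k\, r(A) \geq \lvert A \rvert$ should supply the Hall-type condition that keeps the augmenting search alive.

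The hardest part is precisely this compatibility step: the two exchange conditions constrain $f$ through two different partition blocks ($D_j$ on the $B'$-side, $C_{i(f)}$ on the $B$-side), and there is no obvious reason for a compatible $f$ lying in $B' \setminus B$ to exist for every choice of $e$. One may be forced to abandon monotone decrease of $\lvert B \triangle B' \rvert$ and work instead with a refined potential (for instance, a lexicographic pair combining $\lvert B \triangle B' \rvert$ with a secondary measure comparing the two partitions), or to perform several coordinated exchanges before seeing net progress. The base case $k = 2$ of the conjecture already coincides with the Farber--Richter--Shank conjecture from \cite{FaRiSh85}, which is still open in full generality and is equivalent via Proposition \ref{PropositionBlasiakReductionDeg2} to the degree-$2$ noncommutative form of White's conjecture; any complete proof for arbitrary $k$ must subsume it, so this is the place where I would expect the plan above to genuinely stall in full generality, even while remaining plausible for structured subclasses such as sparse paving or strongly base orderable matroids.
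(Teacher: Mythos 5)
This statement is one of the paper's open conjectures, not a theorem: the paper offers no proof of Conjecture \ref{ConjectureConnected}, and your proposal does not supply one either. What you have written is a plan whose decisive step is explicitly left unestablished, so it must be assessed as having a genuine gap rather than as an alternative route.

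The gap is exactly where you locate it, and it is worth spelling out why it is not a routine fix. Your inductive step requires an element $f$ satisfying three coupled conditions at once: $(B\setminus e)\cup f$ is a basis, $(C_{i(f)}\setminus f)\cup e$ is a basis, and $f\in B'\setminus B$. The first two conditions are tied together through the block $C_{i(f)}$, which itself depends on which $f$ you pick, so the standard exchange lemmas (which certify one exchange at a time against one fixed basis) do not apply, and the matroid union inequality $k\,r(A)\geq\lvert A\rvert$ gives a Hall-type condition for the \emph{existence} of some partition of $E\setminus B^*$ into bases, not for the existence of one obtained by a single local modification of the given $C$-partition. Dropping the requirement that the new partition be a local modification forces you to re-run the union theorem from scratch at every step, at which point the monotone decrease of $\lvert B\triangle B'\rvert$ is no longer controlled. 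As you note, even the case $k=2$ is the Farber--Richter--Shank conjecture, open since 1985 and equivalent (via Proposition \ref{PropositionBlasiakReductionDeg2}) to the degree-$2$ noncommutative part of White's conjecture; a correct version of your argument would already settle that. The paper's actual results go in a different direction: rather than proving connectivity, Theorem \ref{TheoremGeneral} bounds the number of non-complementary bases among many disjoint ones, and the proposition following Conjecture \ref{Conjecturekr+1} proves that auxiliary conjecture only for $k\geq 2^{r-1}+1$. If you want to make partial progress along your lines, the realistic targets are the structured subclasses you mention (strongly base orderable, sparse paving, graphic), where the compatibility of the two exchanges can be certified by the additional structure.
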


\begin{conjecture}\label{Conjecturekr+1}
Let $k\geq 2$, and let $M$ be a matroid of rank $r$ on the ground set $E$ of size $kr+1$. Suppose $x,y\in E$ are two elements such that both sets $E\setminus x$ and $E\setminus y$ can be partitioned into $k$ pairwise disjoint bases. Then there exist partitions of $E\setminus x$ and $E\setminus y$ into $k$ pairwise disjoint bases which share a common basis.
\end{conjecture}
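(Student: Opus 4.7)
The plan is to translate Conjecture~\ref{Conjecturekr+1} via Theorem~\ref{TheoremMatroidUnion} into a statement about rank inequalities. A pair of partitions of $E\setminus x$ and $E\setminus y$ shares a common basis if and only if there is a basis $B$ of $M$ contained in $E\setminus\{x,y\}$ such that both $E\setminus(B\cup\{x\})$ and $E\setminus(B\cup\{y\})$, each of size $(k-1)r$, are $(k-1)$-matroids; by Theorem~\ref{TheoremMatroidUnion} this amounts to the pair of conditions $(k-1)r(A)\geq|A|$ for every subset $A$ of each of those two sets.

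The second step is to exploit the lattice structure of tight sets. Call $A\subset E\setminus x$ \emph{tight} if $|A|=k\cdot r(A)$; submodularity of the rank function shows that tight sets are closed under union and intersection, so the minimal non-empty tight sets of $M|_{E\setminus x}$ are pairwise disjoint, and likewise for $M|_{E\setminus y}$. A simple averaging argument shows that any basis appearing in a partition of $E\setminus x$ into $k$ bases meets every minimal tight set $T$ in exactly $r(T)$ elements. Thus the desired basis $B$ is characterised as a basis of $M|_{E\setminus\{x,y\}}$ that meets every minimal non-empty tight set of $M|_{E\setminus x}$ and of $M|_{E\setminus y}$. The initial data help: any $B_i\in\mathcal P_x$ with $i\geq 2$ already hits every minimal tight set of $M|_{E\setminus x}$, so one tries to deform such a $B_i$, via symmetric exchanges inside partitions of $E\setminus x$ guided by $\mathcal P_y$, into a basis that also meets all minimal tight sets of $M|_{E\setminus y}$.

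The main obstacle will be to show that this deformation can always be carried out. The minimal tight sets of $M|_{E\setminus x}$ and of $M|_{E\setminus y}$ need not align, so a single symmetric exchange can simultaneously gain a hit on one side and destroy one on the other; controlling this interaction requires a careful use of the tight-set lattices on both sides. I expect this step to be closely related to, or at least as subtle as, Conjecture~\ref{ConjectureConnected} for complementary basis graphs of $2$-matroids. A secondary route would be induction on $k$: extract a rigid component common to both $E\setminus x$ and $E\setminus y$, contract or delete it, and apply the inductive hypothesis to the smaller matroid. Locating such a common component under the asymmetric hypotheses on $x$ and $y$ is itself a non-trivial sub-problem, but the tight-set lattice from the previous step provides the natural framework in which to search for it.
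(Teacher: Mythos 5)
Your proposal does not prove the statement, and you should note that the statement is labelled as a conjecture in the paper: it is not proved there in general, and the paper records (just after Conjecture \ref{Conjecturekr+1}) that even the case $k=2$ was studied by Seymour and White in the 1980s without being resolved. The only thing the paper establishes is the partial result that Conjecture \ref{Conjecturekr+1} holds when $k\geq 2^{r-1}+1$. Your first two steps are sound and essentially coincide with the paper's starting point: the reduction via Theorem \ref{TheoremMatroidUnion} to finding a basis $B\subset E\setminus\{x,y\}$ compatible with all ``tight'' obstructions on both sides, and the observation that tight sets form a lattice (closed under unions and intersections by submodularity) whose minimal members are met by every basis of a partition in exactly their rank. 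But the step you defer --- deforming a basis of a partition of $E\setminus x$ so that it also meets every minimal tight set of $M\vert_{E\setminus y}$ --- is precisely the open content of the conjecture. You acknowledge yourself that you do not know how to carry it out and that it is ``at least as subtle as'' Conjecture \ref{ConjectureConnected}; a proof cannot stop at the point where the difficulty begins, so this is a genuine gap rather than a verification left to the reader.

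For comparison, the paper's partial result avoids any deformation. Arguing by contradiction from a partition $B_1,\dots,B_k$ of $E\setminus y$ with $x\in B_1$, it extracts for each $i\geq 2$ an obstructing set $A_i\subset E\setminus(x\cup B_i)$ with $(k-1)r(A_i)<\lvert A_i\rvert$, shows that each $A_i$ contains $y$, is closed in $E\setminus B_i$, and equals the closure there of its non-empty trace $B_1\cap A_i\subset B_1\setminus x$. Since $B_1\setminus x$ has only $r-1$ elements, once $k-1\geq 2^{r-1}$ two of these traces coincide; the union of the two corresponding closures is then a closed set $A$ with $\lvert A\rvert=kr(A)+1$ and $x\notin A$, contradicting Theorem \ref{TheoremMatroidUnion} applied to $E\setminus x$. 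If you want to make progress, that pigeonhole-on-traces mechanism is the natural thing to try to push further; your tight-set lattice is the right framework, but as written your argument establishes nothing beyond the known reformulation.
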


We learned from Joseph Bonin that Conjecture \ref{Conjecturekr+1} for $k=2$ was studied in $1980$s by Paul Seymour and Neil White, but it was not resolved. 

\begin{proposition}
Let $\mathfrak{C}$ be a class of matroids that is closed under deletions that do not lower the rank of a matroid, and adding parallel elements. Then, considered for all matroids in $\mathfrak{C}$, the following implications between conjectures hold:
\begin{enumerate}
\item the strongest White's Conjecture \ref{ConjectureStrongWhite} implies complementary basis graph Conjecture \ref{ConjectureConnected},
\item conjunction of complementary basis graph Conjecture \ref{ConjectureConnected} and Conjecture \ref{Conjecturekr+1} implies the strongest White's Conjecture \ref{ConjectureStrongWhite},\newline 
in general, for every $d\geq 2$ conjunction of Conjecture \ref{ConjectureConnected} for $k$-matroids for $k>d$ and Conjecture \ref{Conjecturekr+1} for $k\geq d$ implies that the toric ideal of a matroid, considered in the noncommutative polynomial ring, is generated in degree at most $d$.
\end{enumerate}
\end{proposition}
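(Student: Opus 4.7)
For part (1), I fix a $k$-matroid $M\in\mathfrak{C}$ with two complementary bases $B,B'$ and extend them to partitions $E=B\sqcup C_1\sqcup\cdots\sqcup C_{k-1}=B'\sqcup C'_1\sqcup\cdots\sqcup C'_{k-1}$. The noncommutative monomials $y_By_{C_1}\cdots y_{C_{k-1}}$ and $y_{B'}y_{C'_1}\cdots y_{C'_{k-1}}$ both map under $\varphi_M$ to $\prod_{e\in E}x_e$, so their difference lies in $I_M$. By Conjecture \ref{ConjectureStrongWhite}, one sequence is transformed into the other by a chain of symmetric exchanges applied to consecutive entries. The key point is that a symmetric exchange between two disjoint bases preserves both disjointness and their union, so every intermediate sequence is again a partition of $E$ into bases; in particular its first entry is always a complementary basis, and changes only by symmetric exchanges to complementary bases. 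This traces out a walk in $\mathfrak{B}^c(M)$ from $B$ to $B'$.

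For part (2), I proceed by induction on the degree $n$ of a noncommutative binomial in $I_M$, the case $n\leq d$ being trivial. In the step $n>d$, Blasiak's trick of adding parallel copies (admissible since $\mathfrak{C}$ is closed under this operation) reduces the task to showing that any two ordered partitions $P=(B_1,\ldots,B_n)$ and $Q=(B'_1,\ldots,B'_n)$ of the ground set of an $n$-matroid $\tilde{M}\in\mathfrak{C}$ can be connected by moves of length $\leq d$. By Conjecture \ref{ConjectureConnected} applied to $\tilde{M}$ (permissible because $n>d$), the complementary bases $B_1$ and $B'_1$ are joined by a chain $B_1=F_0,F_1,\ldots,F_m=B'_1$ in $\mathfrak{B}^c(\tilde{M})$; what remains is to realize each symmetric exchange $F_i\to F_{i+1}=(F_i\setminus e)\cup f$ at the level of the full ordered partition.

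This last step is where Conjecture \ref{Conjecturekr+1} enters. The restriction $\tilde{M}\vert S$ to $S=(E\setminus F_i)\cup\{e\}$ has rank $r$ and size $(n-1)r+1$, and its two sets $S\setminus e=E\setminus F_i$ and $S\setminus f=E\setminus F_{i+1}$ each partition into $n-1$ bases because both $F_i$ and $F_{i+1}$ are complementary in $\tilde{M}$. Conjecture \ref{Conjecturekr+1} with $k=n-1\geq d$ yields a common basis $D\subseteq S\setminus\{e,f\}$ together with partitions $(D,D_1,\ldots,D_{n-2})$ of $E\setminus F_i$ and $(D,D'_1,\ldots,D'_{n-2})$ of $E\setminus F_{i+1}$. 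Then $N=\tilde{M}\vert(E\setminus D)$ is an $(n-1)$-matroid in $\mathfrak{C}$ carrying the ordered partitions $(F_i,D_1,\ldots,D_{n-2})$ and $(F_{i+1},D'_1,\ldots,D'_{n-2})$. Using length-$2$ transposition moves I would slide $D$ from position $2$ to position $n$, apply the inductive hypothesis inside $N$ to convert the first $n-1$ entries via length-$\leq d$ moves, and slide $D$ back. The auxiliary conversions from $P$ to $(F_i,D,D_1,\ldots,D_{n-2})$ and similarly at the $F_{i+1}$ end affect only positions $2,\ldots,n$ and are handled by the inductive hypothesis applied to the $(n-1)$-matroid $\tilde{M}\vert(E\setminus F_i)\in\mathfrak{C}$.

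The main obstacle is exactly this translation step: a single edge $F_i\to F_{i+1}$ of $\mathfrak{B}^c(\tilde{M})$ cannot be realized by a naive length-$2$ symmetric exchange between $F_i$ and the basis of the partition containing $f$, since the replacement basis obtained from that basis by swapping $f$ for $e$ need not itself be a basis. Conjecture \ref{Conjecturekr+1} supplies precisely the common basis $D$ that can be frozen while all remaining difficulty is pushed into an $(n-1)$-matroid where the induction closes. Finally, the case $d=2$ of this argument combined with Proposition \ref{PropositionBlasiakReductionDeg2} (invoked via Conjecture \ref{ConjectureConnected} for $2$-matroids) upgrades degree-$2$ generation to generation by symmetric exchange quadratic binomials, thereby recovering Conjecture \ref{ConjectureStrongWhite}.
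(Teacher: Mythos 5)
Your proposal is correct, and its skeleton matches the paper's at the two decisive points: part (1) is essentially verbatim the paper's argument (under Conjecture \ref{ConjectureStrongWhite} every intermediate sequence is again a partition of $E$ into bases, so the first entries trace a walk in $\mathfrak{B}^c(M)$), and in part (2) the crucial use of Conjecture \ref{Conjecturekr+1} is identical: you apply it to the rank-$r$ restriction of size $(n-1)r+1$ obtained by deleting $F_i\cap F_{i+1}$ (your $S=(E\setminus F_i)\cup\{e\}$ is exactly the paper's $E\setminus(B_1\cap B_1')$), with parameter $n-1\geq d$, to extract the common basis $D$. Where you genuinely diverge is the surrounding machinery. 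The paper stays with \emph{unordered} partitions and quotes Propositions \ref{PropositionBlasiakReduction} and \ref{PropositionBlasiakReductionDeg2} as black boxes, so that part (2) reduces to connectivity of the $k$-base graphs $\mathfrak{B}_k(M)$ for $k>d$; there the common basis $D$ instantly gives a path of length $3$ in $\mathfrak{B}_k(M)$ between vertices containing neighboring complementary bases, and connectivity of $\mathfrak{B}^c(M)$ finishes. You instead inline the relevant direction of Proposition \ref{PropositionBlasiakReduction}: a strong induction on the degree $n$ over all of $\mathfrak{C}$ at once, with \emph{ordered} sequences, transposition (commutator) moves, and recursive calls in the $(n-1)$-matroids $\tilde{M}\vert(E\setminus F_i)$ and $\tilde{M}\vert(E\setminus D)$. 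This is heavier on bookkeeping but more self-contained, and it makes explicit how the noncommutative ordering is absorbed; the paper's route is shorter and yields the quantitative byproduct of short $\mathfrak{B}_k$-paths. Your induction is consistent with the hypotheses exactly as needed (Conjecture \ref{ConjectureConnected} is invoked only for $k$-matroids with $k=n,n-1,\dots,d+1>d$, and Conjecture \ref{Conjecturekr+1} only for $k=n-1,\dots,d\geq d$), and your closing move for $d=2$ --- upgrading degree-$2$ generation to symmetric-exchange generation via Proposition \ref{PropositionBlasiakReductionDeg2}, i.e., via Conjecture \ref{ConjectureConnected} for $2$-matroids --- is the same combination the paper uses, and is indeed necessary in your setup because your intermediate moves use arbitrary degree-$2$ binomials such as transpositions. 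Two small points you should make explicit in a final write-up: the restriction $\tilde{M}\vert S$ to which you apply Conjecture \ref{Conjecturekr+1}, as well as $\tilde{M}\vert(E\setminus F_i)$ and $\tilde{M}\vert(E\setminus D)$, lie in $\mathfrak{C}$ because they are deletions that do not lower the rank (this matters, since both conjectures are assumed only within $\mathfrak{C}$), and the projection of the blow-up $\tilde{M}\to M$ sends bases to bases, so degree-$\leq d$ generation descends along it.
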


\begin{proof}
We begin with implication $(1)$. Let $M$ be a $k$-matroid in $\mathfrak{C}$, and let $B_1,B'_1$ be complementary bases in $M$. So, there exist bases $B_2,\dots,B_k$, $B'_2,\dots,B'_k$ such that entries of the sequences $\mathcal{A}=(B_1,\dots,B_k)$ and $\mathcal{A}'=(B'_1,\dots,B'_k)$ form partitions of the ground set $E$. Then $b=y_{B_1}\cdots y_{B_k}-y_{B'_1}\cdots y_{B'_k}\in I_M$, or equivalently sequences of bases $\mathcal{A}$ and $\mathcal{A}'$ have equal union (as a multiset). By the assumption, we can generate $b$ using quadratic binomials corresponding to symmetric exchanges, or equivalently we can pass between $\mathcal{A}$ and $\mathcal{A}'$ by a sequence of steps, in each step making a symmetric exchange. Notice that all bases appearing during this process are complementary bases of $M$. Observe that the first bases of two sequences joined by a single step are either the same or neighboring. Thus we get a path in $\mathfrak{B}^c(M)$ between $B_1$ and $B'_1$.

For the implication $(2)$, by Propositions \ref{PropositionBlasiakReduction} and \ref{PropositionBlasiakReductionDeg2} it is enough to show that for every $k\geq 3$ and for every $k$-matroid $M$ in $\mathfrak{C}$ the $k$-base graph $\mathfrak{B}_k(M)$ is connected, and for every $2$-matroid $M$ in $\mathfrak{C}$ the complementary basis graph $\mathfrak{B^c}(M)$ is connected. The second part we get directly from complementary basis graph Conjecture \ref{ConjectureConnected}. For the first part, let $M$ be a $k$-matroid in $\mathfrak{C}$ (for $k\geq 3$) and let $\{B_1,\dots,B_k\}$, $\{B_1',\dots,B_k'\}$ be two vertices in $\mathfrak{B}_k(M)$. Since $\mathfrak{B^c}(M)$ is connected, it is enough to show that vertices of $\mathfrak{B}_k(M)$ containing neighboring bases are connected in $\mathfrak{B}_k(M)$. If the symmetric difference of $B_1$ and $B'_1$ is $\{x,y\}$, then consider the restriction of $M$ to the set $E\setminus (B_1\cap B'_1)$. This matroid satisfies assumptions of Conjecture \ref{Conjecturekr+1} with points $x,y$. Thus in $\mathfrak{B}_k(M)$ there are vertices $\{B_1,B''_2,\dots,B''_k\},\{B_1',B'''_2,\dots,B_k'''\}$ connected by an edge. The first one is connected by an edge with $\{B_1,\dots,B_k\}$, while the second with $\{B_1',\dots,B_k'\}$. We get that any two vertices $\{B_1,\dots,B_k\}$, $\{B_1',\dots,B_k'\}$ in $\mathfrak{B}_k(M)$ containing neighboring bases are connected by a path.
\end{proof}

\begin{proposition}
If $k\geq 2^{r-1}+1$, then Conjecture \ref{Conjecturekr+1} holds.
\end{proposition}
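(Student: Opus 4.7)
The plan is to fix a partition $\mathcal{P}=\{B_1,\ldots,B_k\}$ of $E\setminus x$ with $y\in B_1$, and to prove that under the hypothesis $k\geq 2^{r-1}+1$, there exists $i\geq 2$ such that $B_i$ is complementary in the $k$-matroid $M|_{E\setminus y}$, i.e., $(E\setminus y)\setminus B_i$ can be partitioned into $k-1$ bases. Such a $B_i$ gives directly a common basis in partitions of both $E\setminus x$ and $E\setminus y$.

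First, observe that $k\geq 2^{r-1}+1>r$ forces $|B_i|=r$ to be strictly smaller than the minimum cocircuit size of any $k$-matroid of rank $r$. Indeed, by Theorem \ref{TheoremMatroidUnion} every hyperplane $H$ in such a matroid satisfies $|H|\leq k\cdot r(H)=k(r-1)$, so every cocircuit has size at least $k$. Hence no cocircuit of $M|_{E\setminus y}$ fits inside $B_i$, giving $r((E\setminus y)\setminus B_i)=r$. By matroid union, complementarity of $B_i$ reduces to checking $(k-1)r(A)\geq|A|$ for every $A\subseteq(E\setminus y)\setminus B_i$.

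Next, I classify the obstructions to this inequality. A pigeonhole on $\mathcal{P}$: for every $A\subseteq E\setminus\{x,y\}$ with $|A|>(k-1)r(A)$, since $\sum_j|B_j\cap A|=|A|$ and $|B_j\cap A|\leq r(A)$ for each $j$, there cannot be any $B_j$ disjoint from $A$. So obstructions must contain $x$. Applying the same pigeonhole to $A\setminus x$ narrows these further: the only possible blocking sets take the form $A=A'\cup\{x\}$ with $A'\subseteq E\setminus\{x,y\}$, $|A'|=(k-1)r(A')$, and $x\in\mathrm{cl}(A')$; moreover, for each such $A'$ at most one $B_j$ is disjoint from it, so each obstruction eliminates at most one $B_i$ with $i\geq 2$.

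The crucial step is to bound the number of such obstructions by $2^{r-1}-1$. The key point is that the collection of ``critical'' sets arising above is governed by the lattice of tight sets in $M|_{E\setminus y}$ (sets $T$ with $|T|=k\cdot r(T)$), which is closed under union and intersection: using submodularity together with the equality $|T_1|+|T_2|=|T_1\cup T_2|+|T_1\cap T_2|=k(r(T_1)+r(T_2))$, one gets that the rank function is modular on tight sets and hence the tight sets form a distributive sublattice of $2^{E\setminus y}$ of height at most $r$, containing at most $2^r$ elements. The tight sets containing $x$ form a principal filter above the smallest tight set $T_x$ containing $x$; since $T_x\neq\emptyset$ has rank at least $1$, this filter has at most $2^{r-1}$ elements, i.e., at most $2^{r-1}-1$ proper ones. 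Each obstruction $A=A'\cup\{x\}$ of the type above is controlled by one such tight set (roughly, the closure of $A'$ enlarged by $x$), giving the desired bound.

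Finally, with at most $2^{r-1}-1$ obstructions and $k-1\geq 2^{r-1}$ candidate bases $B_2,\ldots,B_k$, pigeonhole yields some $B_i$ ($i\geq 2$) that avoids all obstructions; this $B_i$ is complementary in $M|_{E\setminus y}$, and adjoining it to any partition of $(E\setminus y)\setminus B_i$ into $k-1$ bases produces a partition of $E\setminus y$ sharing $B_i$ with $\mathcal{P}$.

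The main obstacle is the third step: identifying the precise lattice-theoretic invariant that captures the obstructions and proving that the bound $2^{r-1}-1$ is tight. While $k$-tight sets of a $k$-matroid readily form a distributive lattice, the obstructions that actually arise here are ``$(k-1)$-tight'' sets $A'$ with $x$ in their closure, and one must carefully argue that these are in one-to-one (or many-to-one) correspondence with a suitable filter in the distributive lattice of genuine tight sets. This is where the condition $|E|=kr+1$, giving exactly one ``extra'' element beyond a $k$-matroid, plays a crucial role.
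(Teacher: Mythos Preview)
Your setup and your first two steps are correct and essentially match the paper's argument (with the roles of $x$ and $y$ interchanged). The gap is precisely where you locate it: step three. The lattice-theoretic route you sketch is not what the paper does, and as you concede, the link between the $(k-1)$-tight obstructions $A'$ and a filter in the lattice of $k$-tight sets of $M\vert_{E\setminus y}$ is not established. Even granting that the $k$-tight sets form a distributive lattice of height at most $r$, you give no mechanism that assigns to each obstruction a \emph{distinct} element of the claimed filter; without that injection, the bound $2^{r-1}-1$ does not follow.

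The paper's argument for this step is both different and simpler. It does not attempt to count obstructions globally. Instead, it traces each obstruction onto the distinguished basis $B_1$. In your notation: for each obstructed $B_i$ (with $i\geq 2$) you have already shown that $|A'\cap B_j|=r(A')$ for every $j\neq i$, in particular for $j=1$. Since $y\in B_1$ and $y\notin A'$, the trace $A'\cap B_1$ is a \emph{nonempty} subset of the $(r-1)$-element set $B_1\setminus y$. There are only $2^{r-1}-1$ such subsets, so if all $k-1\geq 2^{r-1}$ bases $B_2,\dots,B_k$ are obstructed, two obstructions $A'_i,A'_j$ (for distinct $i,j\geq 2$) have the same trace on $B_1$.

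The conclusion is then a contradiction, not a count. One checks that each $A'_i\cup\{x\}$ is the closure of $B_1\cap A'_i$ in $E\setminus B_i$; when two traces coincide, the union $A:=A'_i\cup A'_j\cup\{x\}$ is therefore the closure of that common trace in all of $E$. It follows that $x\in A$, $y\notin A$, and $|B_l\cap A|=r(A)$ for every $l=1,\dots,k$, whence $|A|=k\,r(A)+1$. But then $A\subset E\setminus y$ violates the inequality $k\,r(A)\geq |A|$ from Theorem~\ref{TheoremMatroidUnion}, contradicting the assumption that $E\setminus y$ is a $k$-matroid.

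So your plan is right in spirit, but the pigeonhole should be applied to the traces $A'\cap B_1\subset B_1\setminus y$, not to an abstract lattice of tight sets, and the finish comes from combining two equal-trace obstructions into a single set that violates the $k$-matroid hypothesis on $E\setminus y$, rather than from a global enumeration of obstructions.
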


\begin{proof}
Proof by contradiction. Let $B_1,\dots,B_k$ be a partition of the set $E\setminus y$ into $k$ pairwise disjoint bases. Without loss of generality $x\in B_1$. If the assertion is not true, then for each $i=2,\dots,k$ the basis $B_i$ can not be completed to a partition of $E\setminus x$ into $k$ bases. Thus from the matroid union Theorem \ref{TheoremMatroidUnion} it follows that for each $i=2,\dots,k$ there is a set $A_i\subset E\setminus (x\cup B_i)$ such that $(k-1)r(A_i)<\lvert A_i\rvert$. On the other hand, since $E\setminus (y\cup B_i)$ has a  partition into $k-1$ pairwise disjoint bases (namely $B_1,\dots,\hat{B_i},\dots,B_k$), for every set $A\subset E\setminus (y\cup B_i)$ the inequality $(k-1)r(A)\geq\lvert A\rvert$ holds. Thus for each $i$ we have $y\in A_i$, $r(A_i)=r(A_i\setminus y)$, and $(k-1)r(A_i\setminus y)=\lvert A_i\setminus y\rvert$. The last equality implies that for every basis $B_j$ (for $j\neq i$) $\lvert B_j\cap A_i\rvert=r(A_i)$. Moreover, since there is equality in the inequality $(k-1)r(A_i\setminus y)\geq\lvert A_i\setminus y\rvert$, each $A_i$ is closed in $E\setminus B_i$, and it is equal to the closure of $B_j\cap A_i$ (for $j\neq i$) in $E\setminus B_i$. Consider sets $B_1\cap A_i\subset B_1\setminus x$. None of them is empty, since otherwise $r(A_i)=0$ and $y$ would be a loop. Thus, since there are $k-1\geq 2^{r-1}$ of them, for some $i\neq j$ the equality $B_1\cap A_i=B_1\cap A_j$ holds. But since $A_i$ is the closure of  $B_1\cap A_i$ in $E\setminus B_i$ and $A_j$ is the closure of $B_1\cap A_j$ in $E\setminus B_j$, we get that the set $A:=A_i\cup A_j$ is the closure of $B_1\cap A_i=B_1\cap A_j$ in $E$, so it is closed, $y\in A$, and $\lvert B_l\cap A\rvert=r(A)$ for every $l=1,\dots,k$. Therefore, $\lvert A\rvert=k\cdot r(A)+1$ and $x\notin A$, which by the matroid union Theorem \ref{TheoremMatroidUnion} contradicts the assumption that $E\setminus x$ can be partitioned into $k$ pairwise disjoint bases.
\end{proof}

%%%%%%%%%%%%%%%%%%%%%%%%%%%%%%%%%%%%%%%%%%%%%%%%%%%%%%%%%%%%%%%%%%%%%%%%%%%%%%%%%%%%%%%%%%%%%%%%%%%%%%%%%%%%%%%%%%%%%%%%%%%%%%%%%%%%%%%%%%%%%%%%%%%%%%%%%%%%%%%%%%%%%%%%%
\section{Degree bounds and Gr\"{o}bner bases}\label{3}
%%%%%%%%%%%%%%%%%%%%%%%%%%%%%%%%%%%%%%%%%%%%%%%%%%%%%%%%%%%%%%%%%%%%%%%%%%%%%%%%%%%%%%%%%%%%%%%%%%%%%%%%%%%%%%%%%%%%%%%%%%%%%%%%%%%%%%%%%%%%%%%%%%%%%%%%%%%%%%%%%%%%%%%%%%

By Hilbert's basis theorem the ideal $I_M$ is finitely generated. However, it is not easy to give any explicit bound on degree in which it is generated. A bound follows from a more general theorem about toric ideals. Theorem $13.14$ from \cite{St95} asserts that if a graded set  $\mathcal{A}\subset\Z^d$ generates a normal semigroup, then the corresponding toric ideal $I_{\mathcal{A}}$ is generated in degree at most $d$. For a matroid $M$ we consider the set of indicator vectors of all bases, that is $\{e_B=\sum_{i\in B}e_i:B\in\mathfrak{B}\}\subset\Z^{\lvert E\rvert}$, and denote it also by $\mathfrak{B}$. By \cite[Theorem $1$]{Wh77} the semigroup $\N\mathfrak{B}$ generated by $\mathfrak{B}$ is normal (it is also an easy consequence of the matroid union theorem -- Theorem \ref{TheoremMatroidUnion}). The toric ideal corresponding to $\mathfrak{B}$ is the ideal $I_M$. Hence, the toric ideal of a matroid is generated in degree at most the size of its ground set. If we fix the size of the ground set, then there are only finitely many matroids on it, so a common bound is not surprising. 

When we fix only the rank, then the number of matroids of that rank is infinite. Theorem  \ref{Theorem2} asserts that in this case there is also a common bound on the degree in which the corresponding toric ideals are generated. Furthermore, there is a common bound on the degree of their Gr\"{o}bner bases. In order to prove it we need the following structural result:

\begin{theorem}\label{TheoremGeneral}
Let $M$ be a $k$-matroid of rank $r$. Among every $k-s$ pairwise disjoint bases in $M$ at most $r(r+2)!+s(r+1)!$ bases are not complementary.
\end{theorem}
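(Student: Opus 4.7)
The plan is to characterize non-complementary bases via Theorem \ref{TheoremMatroidUnion}, extract witnesses with strong closedness/saturation properties, and then induct on the rank $r$. For each non-complementary $B_i$ among $B_1,\dots,B_{k-s}$, matroid union produces a subset $A_i\subseteq E\setminus B_i$ with $(k-1)r(A_i)<|A_i|$. Choosing $A_i$ of minimum rank forces it to be closed in $M|_{E\setminus B_i}$ and makes most of the intersections $|A_i\cap B_j|$ achieve the maximum value $r(A_i)$, exactly as in the proof above of the $k\ge 2^{r-1}+1$ case of Conjecture \ref{Conjecturekr+1}. Writing $L=E\setminus(B_1\cup\cdots\cup B_{k-s})$ for the leftover (which has size $sr$), the slice-wise bound
\[(k-1)r(A_i)<|A_i|\le sr+(k-s-1)r(A_i)\]
collapses to $s\,r(A_i)<sr$, forcing $r(A_i)\le r-1$. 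In particular for $s=0$ there are no non-complementary bases at all and the bound is trivial, so from now on one may assume $s\ge 1$.

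Next, I would induct on $r$, using the base case $r=1$ (where every basis is a non-loop element and complementariness is automatic) and grouping non-complementary bases $B_i$ by the flat $F_i=\mathrm{cl}(A_i)$, which has rank $r_i=r(A_i)\le r-1$. Contracting by $F_i$ yields a matroid $M/F_i$ of strictly smaller rank $r-r_i$, in which the $B_j$ satisfying $|B_j\cap F_i|=r_i$ descend to bases, while the remaining $B_j$ merge into an enlarged leftover. Applying the inductive bound inside $M/F_i$ and then summing over admissible flats should produce a bound of the claimed factorial shape: the factor-of-$r$ contribution at each inductive step (the choice of rank $r_i$ together with the per-flat local counting) is what accumulates into $(r+2)!$ and $(r+1)!$.

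The main obstacle I expect is the precise bookkeeping needed to land on the stated bound $r(r+2)!+s(r+1)!$ rather than a weaker estimate. Three things must be tracked carefully through the induction: how the leftover parameter $s$ transforms under contraction; how many flats are admissible at each rank level $r_i\in\{0,\dots,r-1\}$; and which part of the count splits into the $r$-dependent summand versus the $s$-dependent summand — morally, $r(r+2)!$ should account for witnesses whose excess $|A_i|-(k-1)r(A_i)$ is produced within the $B_j$-part, whereas $s(r+1)!$ counts witnesses whose extra excess is absorbed by $L$ (explaining the linear dependence on $s$). A secondary subtlety is that witnesses need not be unique, so the family $\{A_i\}$ must be chosen by a global extremal principle rather than basis by basis, so that all bases sharing a common flat $F_i$ can be processed jointly inside $M/F_i$ by the inductive hypothesis.
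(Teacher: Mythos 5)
Your opening is on track: invoking Theorem \ref{TheoremMatroidUnion} to extract a witness $A$ with $(k-1)r(A)<\lvert A\rvert$ for a non-complementary basis, and showing $0<r(A)<r$, is exactly how the paper's proof starts. But the core of your induction --- ``grouping non-complementary bases by the flat $F_i=\mathrm{cl}(A_i)$'' and ``summing over admissible flats'' --- has a genuine gap, which you have only half-identified as ``bookkeeping.'' First, there is no bound on the number of distinct flats $F_i$ that can arise (a priori it can be as large as the number of non-complementary bases), so a sum over flats cannot produce a bound depending only on $r$ and $s$ unless you also bound the number of flats, which you do not. Second, and more fundamentally, you give no argument that a basis which is non-complementary in $M$ stays non-complementary after contracting $F_i$; the useful implication runs the other way, and only if one controls a restriction and a contraction simultaneously.

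The paper's proof avoids both problems by using a \emph{single} witness $A$, coming from just one non-complementary basis, rather than one witness per basis. It intersects $A$ with a fixed partition $B_1,\dots,B_k$ of the ground set into bases --- kept separate from the $k-s$ arbitrary disjoint bases $D_1,\dots,D_{k-s}$, a distinction your sketch blurs --- to produce a set $A'$ meeting almost every partition class in exactly $r(A)$ elements. Restricting to the union $E'$ of those classes gives a $(k-r)$-matroid $M'$, and $A'$ splits the problem into $M'\vert_{A'}$ of rank $r(A)$ and $M'/A'$ of rank $r-r(A)$, both strictly smaller than $r$. The decisive observation is that if $D_i\cap A'$ is complementary in $M'\vert_{A'}$ and $D_i\setminus A'$ is complementary in $M'/A'$, then $D_i$ is complementary in $M$; hence the number of non-complementary bases is at most the sum of the two inductive bounds plus the few bases discarded in the reductions, and the arithmetic (run with the slightly strengthened inductive bound $r(r+2)!+s((r+1)!-2)$) closes. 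Without this single-witness splitting into a restriction and a contraction, the bookkeeping you defer is not merely delicate --- it is the missing proof.
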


\begin{proof} 
We prove by induction on $r$, that among every $k-s$ pairwise disjoint bases there are at most $r(r+2)!+s((r+1)!-2)$ not complementary bases. When $r=1$, then the statement becomes trivial. Suppose $r\geq 2$, and fix $k,s$. Let $B_{1},\dots,B_{k}$ be disjoint bases of the $k$-matroid $M$. Their union is the whole ground set $E$. Let $D_{1},\dots,D_{k-s}$ be arbitrary pairwise disjoint bases in $M$. If every $D_j$ is complementary, then the assertion clearly holds. So, we can assume that some basis $D_{j}$ is not complementary.

Due to the matroid union theorem -- Theorem \ref{TheoremMatroidUnion}, there exists a set $A\subset E\setminus D_{j}$, such that
$$(k-1)\cdot r(A)<\lvert A\rvert.$$
Of course $0<r(A)<r$. Indeed, otherwise either $A$ would have to be empty (in a $k$-matroid $M$ there are no loops) and we would have $0<0$, or we would have $(k-1)\cdot r<\lvert A\rvert\leq\lvert E\setminus D_{j}\rvert=(k-1)\cdot r$.

Let $A_i=A\cap B_i$ for every $i=1,\dots,k$. Since every $B_i$ is a basis, inequalities $\lvert A_i\rvert\leq r(A)$ hold. And, all together
$$(k-1)\cdot r(A)<\lvert A\rvert=\lvert A_1\rvert+\dots+\lvert A_k\rvert\leq k\cdot r(A).$$
Therefore for every $i$, except at most $r(A)-1\leq r$, we have $\lvert A_i\rvert=r(A)$. Without loss of generality the equality holds for $i=1,\dots,k-r$. 

Let $E'=B_{1}\cup\dots\cup B_{k-r}$ and let $A'=A_{1}\cup\dots\cup A_{k-r}$. We are going to reduce the problem to the $(k-r)$-matroid $M'=M\vert_{E'}$ (restriction of $M$ to the set $E'$), and then use the set $A'$ to split it into smaller instances -- for $M'\vert_{A'}$ (restriction of $M'$ to $A'$) and for $M'/A'$ (contraction of $A'$ in $M'$).

Notice that there are at most $r^2$ bases among bases $D_i$ which have non-empty intersection with $B_{k-r+1}\cup\dots\cup B_k$. Thus, without loss of generality, bases $D_1,\dots,D_{k-s-r^2}$ are contained in $E'$.

Let $C_i=A'\cap D_i$ for $i=1,\dots,k-s-r^2$. Since every $D_i$ is a basis, inequalities $\lvert C_i\rvert\leq r(A')=r(A)$ hold. In order to split the problem for $M'\vert_{A'}$ and $M'/A'$ we need bases $D_i$ satisfying $\lvert C_i\rvert=\lvert A'\cap D_i\rvert=r(A)$. 

Since $D_{1},\dots,D_{k-s-r^2}$ cover all except $(s+r^2)r$ elements of $E$ we get
$$(k-s-r^2)r(A)-(s+r^2)r\leq (k-r)r(A)-(s+r^2)r=\lvert A'\rvert-(s+r^2)r\leq$$ 
$$\leq\lvert A'\cap(D_1\cup\dots\cup D_{k-s-r^2})\rvert=\lvert C_1\rvert+\dots+\lvert C_{k-s-r^2}\rvert\leq (k-s-r^2)r(A).$$
Therefore for every $i=1,\dots,k-s-r^2$, except at most $(s+r^2)r$, the equality $\lvert C_i\rvert=r(A)$ holds. Without loss of generality it holds for $i=1,\dots,k-(s+r^2)(r+1)$. Denote $s'=(s+r^2)(r+1)-r$. Now we can pass to the matroids $M'\vert_{A'}$ and $M'/A'$. 

We have
\begin{enumerate}
\item $(k-r)$-matroid $M'\vert_{A'}$ of rank $r(A)<r$ with $(k-r)-s'$ pairwise disjoint bases $D_1\cap A',\dots,D_{k-r-s'}\cap A'$, and
\item $(k-r)$-matroid $M'/A'$ of rank $r-r(A)<r$ with $(k-r)-s'$ pairwise disjoint bases $D_1\setminus A',\dots,D_{k-r-s'}\setminus A'$. 
\end{enumerate}

For both cases we use the inductive assumption. In the case $(1)$ there are at most $r(A)(r(A)+2)!+s'(r(A)+1)!-2s'$ bases not complementary in $M'\vert_{A'}$. In the case $(2)$ there are at most $(r-r(A))(r-r(A)+2)!+s'(r-r(A)+1)!-2s'$ bases not complementary in $M'/A'$. 

Notice that if $D_i\cap A'$ is a basis complementary in $M'\vert_{A'}$ and $D_i\setminus A'$ is a basis complementary in $M'/A'$, then $D_i$ is a basis complementary in $M$. Therefore all together there are at most 
$$t=(s'+r)+r(A)(r(A)+2)!+s'(r(A)+1)!-2s'$$
$$+(r-r(A))(r-r(A)+2)!+s'(r-r(A)+1)!-2s'$$
bases not complementary in $M$. Denote $r'=r(A)$ and recall that $0<r'<r$. Now,
$$t=r+r'(r'+2)!+(r-r')(r-r'+2)!+(r^2(r+1)-r)(1+(r'+1)!-2+(r-r'+1)!-2)+$$
$$+s(r+1)(1+(r'+1)!-2+(r-r'+1)!-2)\leq r(r+2)!+s((r+1)!-2).$$
It is easy to verify the last inequality, because for $s\geq 0$ and $r\geq 2$ we have both
$$s(r+1)(1+r!-2+2!-2)\leq s((r+1)!-2),$$
$r+(r-1)(r+1)!+3!+(r^2(r+1)-r)(1+r!-2+2!-2)\leq r(r+2)!.$
\end{proof}

We do not have faith in a positive answer to Question \ref{QuestionHerzogHibi} $(2)$ of Herzog and Hibi -- that is, existence of a quadratic Gr\"{o}bner basis for the toric ideal of a matroid. That would mean (see \cite{St95}) that the base polytope of a matroid possesses a quadratic regular unimodular triangulation. This property is nearly at the top of the whole hierarchy of possible combinatorial properties of a convex lattice polytope (consult \cite{HaPaPiSa14}). It is known that the base polytope of a matroid has the integer Carath\'eodory property \cite{GiRe12}, and therefore it is normal (which was earlier shown in \cite[Theorem $1$]{Wh77}). However, using Theorem \ref{TheoremGeneral} we are able to prove a bound on the degree of a Gr\"{o}bner basis that depends only on the rank of a matroid:

\begin{theorem}\label{Theorem2}
Let $M$ be a matroid of rank $r$. The toric ideal $I_M$ has a Gr\"{o}bner basis of degree at most $(r+3)!$. In particular, it is generated in degree at most $(r+3)!$.  
\end{theorem}

\begin{proof}
Let $<$ be any fixed order on variables $y_B$, and let $<_{revlex}$ be the corresponding reverse lexicographic order on monomials in the ring $\K[y_B:B\in\mathfrak{B}]$. We prove by induction on $n$, that if $b\in I_M$ is a binomial of degree $n$, then its leading monomial is divisible by the leading monomial of some binomial in $I_M$ of degree at most $(r+3)!$. This implies that the set of monic binomials of degree at most $(r+3)!$ forms a Gr\"{o}bner basis of $I_M$ with respect to $<_{revlex}$.

If $n\leq (r+3)!$, then it is clear. Suppose $n>(r+3)!$ and that the assertion holds for all degrees less than $n$. Let $b=y_{B_1}\cdots y_{B_n}-y_{D_1}\cdots y_{D_n}\in I_M$ be a binomial, where the first monomial is leading. Suppose, without loss of generality, that $y_{D_1}$ is a minimal variable among variables $y_{D_i}$. Notice that if for some $i,j$ equality $B_i=D_j$ holds, then $b=y_{B_i}b'$ for some binomial $b'\in I_M$ and the assertion follows from the inductive assumption for $b'$ which is of degree $n-1$. Therefore, since $y_{D_1}\cdots y_{D_n}<_{revlex}y_{B_1}\cdots y_{B_n}$, we have $y_{D_1}<y_{B_i}$ for every $i$. 

Let $M'$ be a $n$-matroid obtained from the matroid $M\vert_{B_1\cup\dots\cup B_n}$ by replacing every element $e\in E$ by $w_e=\lvert\{i:e\in B_i\}\rvert$ parallel elements $(e,1),\dots,(e,w_e)$. The ground set $E'$ of $M'$ has two natural partitions into $n$ bases, namely $B'_1\cup\dots\cup B'_n$ and $D'_1\cup\dots\cup D'_n$, where $B'_i=\{(e,v_e):e\in B_i,v_e=\lvert\{j\leq i:e\in B_j\}\rvert\}$ and $D'_i$ are defined analogously. Denote by $T$ the set of indices $i$ such that $B'_i$ intersects basis $D'_1$, clearly $t=\lvert T\rvert\leq r$. Consider $(n-1)$-matroid $M'\vert_{D'_2\cup\dots\cup D'_n}$. By Theorem \ref{TheoremGeneral} among $n-t\geq n-r$ disjoint bases $B'_i$ for $i\in\{1,\dots,n\}\setminus T$ there are at most $r(r+2)!+(r-1)(r+1)!<(r+3)!-r<n-r$ not complementary bases. Thus, there exists a basis $B'_k$ that can be completed by some bases $H'_1,\dots,H'_{n-2}$ to $E'\setminus D'_1$. Let $H_i=\{e\in E:\exists_j(e,j)\in H'_i\}$ be the basis in $M$ corresponding to $H'_i$. Now $b'=y_{B_1}\cdots y_{B_n}-y_{D_1}y_{B_k}y_{H_1}\cdots y_{H_{n-2}}\in I_M$ is a binomial in which $y_{B_1}\cdots y_{B_n}$ is the leading monomial. Moreover, $b=y_{B_k}b'$ for some binomial $b'\in I_M$ and the assertion follows by induction.
\end{proof}

\begin{corollary}\label{Corollary1}
Checking if Conjecture \ref{ConjectureWeakWhite}, \ref{ConjectureClassicWhite} or \ref{ConjectureStrongWhite} is true for matroids of a fixed rank is decidable (it reduces to connectivity of a finite number of graphs).
\end{corollary}

\begin{proof}
Using Proposition \ref{PropositionBlasiakReduction}, in order to check if Conjecture \ref{ConjectureWeakWhite} is true for matroids of rank $r$ it is enough to check if for every $k$-matroid $M$ of rank $r$ (for every $k>2$), the $k$-base graph $\mathfrak{B}_k(M)$ is connected. By Theorem \ref{Theorem2} it is enough to consider $k$ from the range $(r+3)!\geq k>2$, since for $k>(r+3)!$ the statement is true. That is, the problem reduces to checking connectivity of a finite number of graphs. 

To check if Conjectures \ref{ConjectureWeakWhite} and \ref{ConjectureStrongWhite} are equivalent for matroids of rank $r$, by Proposition \ref{PropositionBlasiakReductionDeg2} it suffices to check connectivity of a finite number of graphs.

Analogously, to check if Conjectures \ref{ConjectureWeakWhite} and \ref{ConjectureClassicWhite} are equivalent for matroids of rank $r$ it suffices to check connectivity of graphs from Proposition \ref{PropositionBlasiakReductionDeg2} modified by adding an edge between every complementary basis $B$ and its complement $B^c$ (these are bases of a $2$-matroid). This completes the proof of Corollary \ref{Corollary1}.
\end{proof}

We get a new class of discrete polymatroids for which White's Conjecture \ref{ConjectureWeakWhite} is true (for an extension of White's conjectures to discrete polymatroids see \cite{HeHi02}):

\begin{corollary}\label{Corollary3}
Let $P$ be a discrete polymatroid which is a join of $c\geq\frac{1}{2}(r+3)!$ copies of a matroid $M$ of rank $r$ (a basis of $P$ is a union, as a multiset, of $c$ bases of $M$). Then the toric ideal $I_P$ is generated in degree $2$.
\end{corollary}

\begin{proof}
We will prove the following claim. Let $P$ be a discrete polymatroid which is a join of $c$ copies of a matroid $M$. Suppose that the toric ideal $I_M$ is generated in degree at most $2c$. Then the toric ideal $I_P$ is generated in degree $2$.

Let $D_1,\dots,D_k,\tilde{D}_1,\dots,\tilde{D}_k$ be bases of $P$ with $y_{D_1}\cdots y_{D_k}-y_{\tilde{D}_1}\cdots y_{\tilde{D}_k}\in I_P$. For $i=1,\dots,k$ and $j=1,\dots,c$ let $B_i^j$ and $\tilde{B}_i^{j}$ be bases of $M$ such that $D_i=\bigcup_j B_i^j$ and $\tilde{D}_i=\bigcup_j \tilde{B}_i^{j}$. Then $\prod_{i,j}y_{B_i^j}-\prod_{i,j}y_{\tilde{B}_i^{j}}\in I_M$. 

When one exchanges bases $B_i^j$ and $B_{i'}^{j'}$ between bases $D_i$ and $D_{i'}$ of $P$, then the corresponding elements of $I_P$ differ by an element generated in degree $2$. Thus we can rearrange bases $B_i^j$ (and $\tilde{B}_i^{j}$) into an arbitrary $k$ multisets of $c$ bases. Since $I_M$ is generated in degree $2c$, one can pass between the multisets of bases $\{B_i^j:i,j\}$ and $\{\tilde{B}_i^{j}:i,j\}$ by a sequence of steps, in each step exchanging $2c$ bases for another $2c$ bases of the same union (as a multiset). We partition these $2c$ bases into an arbitrary $2$ parts of $c$ bases. Each part corresponds to a basis of $P$. This way we are able to pass between the multisets of bases $\{D_i\}_i$ and $\{\tilde{D}_i\}_i$ of $P$ by a sequence of steps, in each step exchanging only $2$ bases and preserving multiset union.
\end{proof}

White's conjectures focus on generators of the toric ideal of a matroid. We believe the study of properties of its minimal free resolution may be very significant. The toric ideal of a matroid is graded, so it has a graded free resolution, invariants of which are summarized in the table of graded Betti numbers. The Betti table plays a central role in modern commutative algebra, as it provides a lot of algebraic information about the ideal, indicating e.g. regularity, projective dimension, the Cohen-Macaulay property, the Gorenstein property (for the toric ideal of a matroid see \cite{HiLaMaMiVo19,LaMi20}), the Koszul property, and many others. Astonishingly little is known about the structure of Betti tables associated to arbitrary matroids. One of the possible explanations is the fact that providing the first step of the minimal free resolution is already as hard as White's conjecture -- as the zeroth graded Betti number of degree $d$ is equal to the minimum number of generators of degree $d$. 

We aim at a description of the shape of the nonzero part of the Betti tables of the toric ideals of matroids of a fixed rank:

\begin{corollary}\label{Corollary2}
	Let $M$ be a matroid of rank $r$. The $i$-th graded Betti number of the toric ideal $I_M$ vanishes for degrees greater than $(r+3)!(i+1)$.
\end{corollary}

\begin{proof}
	By Theorem \ref{Theorem2} the toric ideal $I_M$ has an initial ideal $in_{<}(I_M)$ generated in degree at most $(r+3)!$. By the Taylor's resolution of monomial ideals, the $i$-th Betti number of degree $j$ of the monomial ideal $in_{<}(I_M)$ vanishes for $j>(r+3)!(i+1)$. Now, by the standard deformation argument, the $i$-th Betti number of degree $j$ of the original ideal $I_M$ also vanishes for $j>(r+3)!(i+1)$.
\end{proof}

However, we cannot bound the whole nonzero part of the Betti tables of the toric ideals of matroids of a fixed rank: 

\begin{remark}
The projective dimension of the toric ideal $I_M$, that is the index of the largest nonzero Betti number, is not bounded with respect to the rank of $M$.
\end{remark}

%%%%%%%%%%%%%%%%%%%%%%%%%%%%%%%%%%%%%%%%%%%%%%%%%%%%%%%%%%%%%%%%%%%%%%%%%%%%%%%%%%%%%%%%%%%%%%%%%%%%%%%%%%%%%%%%%%%%%%%%%%%%%%%%%%%%%%%%%%%%%%%%%%%%%%%%%%%%%%%%%%%%%%%%%
\section{Ramsey-type results for blow-ups of bases}\label{5}
%%%%%%%%%%%%%%%%%%%%%%%%%%%%%%%%%%%%%%%%%%%%%%%%%%%%%%%%%%%%%%%%%%%%%%%%%%%%%%%%%%%%%%%%%%%%%%%%%%%%%%%%%%%%%%%%%%%%%%%%%%%%%%%%%%%%%%%%%%%%%%%%%%%%%%%%%%%%%%%%%%%%%%%%%%

By \emph{$k$-th blow-up of a matroid $M$} we mean a matroid obtained from $M$ by replacing every element of its ground set $E$ by $k$ parallel elements. By \emph{$k$-th blow-up of a set $A\subset E$ in $M$} we mean a matroid obtained from $M$ by replacing every element of $A$ by $k$ parallel elements. 

Let $N,M$ be two matroids of the same rank $r$ on the same ground set $E$. We say that $N$ is a \emph{submatroid} of $M$, if the complex of independent sets of $N$ is a subcomplex of the complex of independent sets of $M$, or equivalently, if the set of bases of $N$ is contained in the set of bases of $M$. 

We define a convenient notion of morphisms between matroids. Let $M$ and $M'$ be two matroids of the same rank $r$ on the corresponding ground sets $E,E'$. A \emph{morphism} $\psi$ from $M$ to $M'$ is a function $\psi:E\rightarrow E'$, such that if $B'$ is a basis in $M'$, then any choice of representatives of sets $\{\psi^{-1}(b')\}_{b'\in B'}$ forms a basis in $M$. That is, a function $\psi$ is a morphism if $M$ contains a compatible submatroid which is obtained from $M'$ by replacing every element $e\in E'$ by $\lvert\psi^{-1}(e)\rvert$ parallel copies of $e$. In particular, there is a natural morphism from the $k$-th blow up of a matroid to the original matroid. Let us formulate the key observation:

\begin{observation}\label{ObservationBlowDown}
Suppose $\psi:M\rightarrow M'$ is a morphism between matroids that sends variables of a binomial $b\in I_M$ to variables (i.e. images of the corresponding bases are bases). Let $\psi(b)$ be the binomial $b$ with variables replaced by their images. Then $\psi(b)\in J_{M'}$ implies $b\in J_M$. 
\end{observation}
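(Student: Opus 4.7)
The plan is to reduce the statement to two tractable operations in $M$: lifting a single symmetric exchange from $M'$ to $M$, and permuting parallel copies inside a single fiber $\psi^{-1}(e')$ via symmetric exchanges. Write $b = y_{B_1}\cdots y_{B_n} - y_{B_1'}\cdots y_{B_n'}$. Since $\psi$ sends each $B_i, B_i'$ to a basis of $M'$ and the two matroids share rank $r$, the map $\psi$ is injective on each of these bases; consequently $\psi(b)$ lies in $I_{M'}$, and by hypothesis in $J_{M'}$. Unwinding, this gives a sequence of symmetric exchanges in $M'$ transforming the multiset $\{\psi(B_i)\}$ into $\{\psi(B_i')\}$.

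Next I would lift that sequence step-by-step to $M$. For a step replacing bases $C_a, C_b$ of $M'$ by $(C_a \setminus e') \cup f'$ and $(C_b \setminus f') \cup e'$, pick the $M$-bases $B_a, B_b$ currently matched to $C_a, C_b$, and let $e \in B_a$, $f \in B_b$ be the unique $\psi$-preimages of $e', f'$ inside these bases. Because $\psi$ is a morphism and $(C_a \setminus e') \cup f'$ is a basis of $M'$, the representative set $(B_a \setminus e) \cup f$ is a basis of $M$, and similarly for $(B_b \setminus f) \cup e$. Moreover $e \in B_a \setminus B_b$ and $f \in B_b \setminus B_a$, since otherwise a basis would carry a circuit formed by two parallel elements in the same fiber. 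So each lifted step is a genuine symmetric exchange in $M$, and iterating produces bases $B_1'', \ldots, B_n''$ satisfying $\{\psi(B_i'')\} = \{\psi(B_i')\}$, obtained from $\{B_i\}$ by symmetric exchanges.

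The main obstacle is that the lifted endpoint $\{B_i''\}$ need not equal $\{B_i'\}$: even after relabeling so that $\psi(B_i'') = \psi(B_i')$ for every $i$, the bases $B_i''$ and $B_i'$ may use different parallel representatives of the same element of $E'$. However, symmetric exchanges preserve the multiset union, so $\bigsqcup_i B_i'' = \bigsqcup_i B_i = \bigsqcup_i B_i'$ as multisets in $E$, the last equality coming from $b \in I_M$. Therefore, for every $e' \in E'$, the multiset of representatives of $e'$ used across $\{B_i''\}_i$ agrees with the one used across $\{B_i'\}_i$, and the discrepancy decomposes into transpositions that stay inside individual parallel classes. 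A single such transposition, swapping $a \in B_i''$ and $b \in B_j''$ with $\psi(a) = \psi(b) = e'$, is realized by the quadratic move $(B_i'', B_j'') \mapsto ((B_i'' \setminus a) \cup b,\ (B_j'' \setminus b) \cup a)$: the new sets are bases by parallel replacement, and since $\{a,b\}$ is a circuit neither basis can contain both, so $a \in B_i'' \setminus B_j''$ and $b \in B_j'' \setminus B_i''$, making this a legitimate symmetric exchange.

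Composing these transpositions turns $\{B_i''\}$ into $\{B_i'\}$ by quadratic generators of $J_M$. Chaining the two phases expresses $b$ as a telescoping sum of binomials from $J_M$, proving $b \in J_M$.
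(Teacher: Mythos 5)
Your proof is correct and follows essentially the same route as the paper's: lift each symmetric exchange from $M'$ to $M$ using the morphism property, then reconcile the choice of parallel representatives by further symmetric exchanges within the fibers. You merely spell out the paper's terse final step as an explicit decomposition into fiber transpositions; the only caveat is that your justifications there should rest on the morphism property (any transversal of a basis of $M'$ is a basis of $M$) and on $\psi$ being injective on each basis, rather than on $\{a,b\}$ literally being a circuit of $M$, which the definition of morphism does not guarantee -- but the conclusions you draw hold anyway.
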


\begin{proof}
Suppose $b=y_{B_1}\cdots y_{B_k}-y_{D_1}\cdots y_{D_k}$. The condition $\psi(b)\in J_{M'}$ means that one can modify the monomial $y_{\psi(B_1)}\cdots y_{\psi(B_k)}$ using quadratic binomials corresponding to symmetric exchanges in $M'$ to get $y_{\psi(D_1)}\cdots y_{\psi(D_k)}$. But, every symmetric exchange in $M'$ between bases $\psi(B_1),\psi(B_2)$ lifts to a symmetric exchange in $M$ between $B_1,B_2$. Therefore, we can modify $y_{B_1}\cdots y_{B_k}$ using symmetric exchanges in $M$ to get $y_{B'_1}\cdots y_{B'_k}$ such that $\psi(B'_i)=\psi(D_i)$. Since any choice of representatives of bases from $M'$ forms bases in $M$, and $b\in I_M$, we can modify $y_{B'_1}\cdots y_{B'_k}$, step by step using symmetric exchanges, to get $y_{D_1}\cdots y_{D_k}$.
\end{proof}

We say that $k$ disjoint \emph{bases $B_1,\dots,B_k$} of a matroid $M$ \emph{contain the $k$-th blow-up of the basis $B_1$} if there exists a morphism $\psi:M\vert_{B_{1}\cup\dots\cup B_{k}}\rightarrow M\vert_{B_{1}}$, such that $\psi(B_1)=\dots=\psi(B_k)=B_1$ and $\psi$ is an identity on $B_1$. That is, if one can label the elements of $B_{1}\cup\dots\cup B_{k}$ with labels $l_1,\dots,l_r$, elements of every basis $B_i$ with distinct labels, such that every set of $r$ elements of distinct labels is a basis in $M$. 

Our Ramsey-type result asserts that if a matroid contains sufficiently many disjoint bases, then it contains an arbitrarily large $k$-th blow-up of a basis. Moreover, if we modify these bases by only symmetric exchanges, we can guarantee that this $k$-th blow-up agrees with some $k$ bases:

\begin{lemma}\label{LemmaBlowUp}
For every positive integers $r,k$ there exists an integer $n=n(r,k)$, such that if $M$ is an $n$-matroid of rank $r$ with disjoint bases $B_1,\dots,B_n$, then there exists a modification of these bases by symmetric exchanges to $B'_1,\dots,B'_n$ from which one can pick $k$ bases $B_{i_1},\dots,B_{i_k}$ that contain the $k$-th blow-up of basis $B_{i_1}$.
\end{lemma}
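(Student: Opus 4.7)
The plan is to prove the lemma by induction on the rank $r$. For the base case $r = 1$, every loopless rank-$1$ matroid has each element as a basis and every two distinct elements parallel, so labeling each element with the single available label already exhibits the $k$-th blow-up of any $k$ of the $B_i$; hence $n(1, k) = k$ works, with no exchanges required.

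For $r \geq 2$, assume $n(r-1, m)$ has been defined for every positive integer $m$. Fix any $e_1 \in B_1$, and for each $i \geq 2$ invoke the symmetric exchange property to pick $f_i \in B_i$ with both $(B_1 \setminus e_1) \cup f_i$ and $(B_i \setminus f_i) \cup e_1$ bases of $M$. Form the auxiliary matroid $M' = (M/e_1) \setminus \{f_2, \dots, f_n\}$; it has rank $r - 1$ and ground set of size $n(r - 1)$, partitioned by the $n$ pairwise disjoint sets $B_1 \setminus e_1$ and $B_i \setminus f_i$ ($i \geq 2$), each a basis of $M'$. Thus $M'$ is an $n$-matroid of rank $r-1$, and provided $n$ is sufficiently large in terms of $n(r-1, k)$, the inductive hypothesis produces a sequence of symmetric exchanges in $M'$ converting these bases into $B_1^\star, \dots, B_n^\star$ among which some $k$ contain the $k$-th blow-up of $B_{i_1}^\star$ in $M'$.

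It remains to lift this configuration back to $M$. Setting $\widetilde{B}^\star_{i_j} = B^\star_{i_j} \cup e_1$ when $i_j = 1$ and $\widetilde{B}^\star_{i_j} = B^\star_{i_j} \cup f_{i_j}$ otherwise yields $k$ disjoint candidate bases in $M$; labeling each distinguished element with a new label $l_1$ and inheriting the labels $l_2, \dots, l_r$ from the $M'$-blow-up, one must check that every transversal is a basis of $M$. If the $l_1$-element of a transversal is $e_1$, the transversal has the form $T \cup e_1$ for a basis $T$ of $M'$, which is automatically a basis of $M$; the nontrivial case is when the $l_1$-element is some $f_{i_j}$, requiring $f_{i_j} \notin \mathrm{cl}_M(T)$ for every such transversal $T$. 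An analogous condition appears already when lifting the symmetric exchanges in $M'$: each swap $x \leftrightarrow y$ between reduced bases becomes a symmetric exchange in $M$ between the lifted bases precisely when the distinguished element of each affected basis remains outside the closure of the updated reduced basis.

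I expect this compatibility to be the main obstacle. The natural remedy is to interleave an additional Ramsey-type pigeonhole before invoking the inductive hypothesis: from the pairs $(B_i, f_i)$, retain only a large subfamily on which the substitution of $f_i$ for $e_1$ preserves independence uniformly against the configurations that can arise during the induction. Applying the inductive hypothesis to this refined subfamily and combining with the Ramsey reduction yields an explicit (Ramsey-tower type) value of $n(r, k)$ making the argument go through.
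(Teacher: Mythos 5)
Your induction scheme and base case match the paper's, and you have correctly located the crux: the symmetric exchanges performed in the rank-$(r-1)$ matroid, and the final labeling, must be lifted back to $M$, and with a single contracted element $e_1$ this fails. An exchange in $M'=(M/e_1)\setminus\{f_2,\dots,f_n\}$ between $B_i\setminus f_i$ and $B_{i'}\setminus f_{i'}$ only certifies that $((B_i\setminus f_i)\setminus x)\cup y\cup e_1$ is a basis of $M$, not that $((B_i\setminus f_i)\setminus x)\cup y\cup f_i$ is; likewise nothing controls whether a transversal $T$ of the $M'$-blow-up satisfies $T\cup f_{i_j}\in\mathfrak{B}$. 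But the remedy you sketch does not close this gap, and as stated it is circular: you propose to retain only pairs $(B_i,f_i)$ that behave well ``against the configurations that can arise during the induction,'' yet those configurations are exactly the bases produced by the not-yet-chosen sequence of exchanges inside the not-yet-chosen subfamily, so there is no fixed finite coloring to which a Ramsey or pigeonhole argument can be applied. The proposal therefore records the right obstacle without supplying the idea that removes it.

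The paper's resolution is concrete and differs from your single-contraction setup in two essential ways. First, it gives each relevant basis its \emph{own} distinguished element: choosing $b_i\in B_i$ for $i\leq s=rn(r-1,k)$, it pigeonholes the labels of the exchange partners $b_{j,i}\in B_j$ (for donor bases $B_j$, $j>s$) so that for many $j$ a \emph{single} element $b_j\in B_j$ exchanges symmetrically with every $b_i$, and then it pigeonholes those $j$'s by the actual contracted matroid $M_j=(M/b_j)\vert_{B_1\cup\dots\cup B_{n(r-1,k)}}$ --- a well-defined finite coloring, since there are at most $2^{2^{rn(r-1,k)}}$ matroids on that fixed ground set. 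This is the uniformity you were groping for: coloring by the \emph{entire} contraction controls all future configurations at once, rather than a pre-specified list. Second, it performs a preliminary round of genuine symmetric exchanges (swapping $b_i$ with $b_{s+i}$) so that each resulting basis $D_i$ contains a distinguished $d_i$ with all contractions $N=(M/d_i)\vert_{(D_1\setminus d_1)\cup\dots\cup(D_{n(r-1,k)}\setminus d_{n(r-1,k)})}$ equal to one and the same matroid. Equality of these contractions is exactly what makes everything lift: an exchange inside $N$ between $F_i=D_i\setminus d_i$ and $F_{i'}=D_{i'}\setminus d_{i'}$ produces bases $(F_i\setminus x)\cup y$ and $(F_{i'}\setminus y)\cup x$ of $N=(M/d_i)\vert_{\dots}=(M/d_{i'})\vert_{\dots}$, hence bases $(D_i\setminus x)\cup y$ and $(D_{i'}\setminus y)\cup x$ of $M$; and every transversal $T$ of the rank-$(r-1)$ blow-up is a basis of $N$, so $T\cup d_i$ is a basis of $M$ for \emph{every} $i$, which is precisely the condition needed to give the $d_i$'s the last label $l_r$. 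Without per-basis distinguished elements and the equal-contraction pigeonhole, your lift cannot be repaired.
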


\begin{proof}
The proof goes by induction on the rank $r$. If $r=1$, then $M$ itself is the $n$-th blow-up of a basis. In particular, we do not need to make a modification and any $k$ bases contain the $k$-th blow-up of a basis. Thus $n(1,k)=k$.

Suppose $r\geq 2$, and fix also a positive integer $k$. We will show that for $$n=n(r,k)=rn(r-1,k)+r^{rn(r-1,k)}2^{2^{rn(r-1,k)}}n(r-1,k)$$ 
the desired property holds. Denote $s=rn(r-1,k)$, and $t=n-s$. 

Let $M$ be an $n$-matroid of rank $r$ with disjoint bases $B_1,\dots,B_n$. Choose an element $b_i$ in each basis $B_i$ among $B_1,\dots,B_s$. Consider symmetric exchanges between bases $B_1,\dots,B_s$ and bases $B_{s+1},\dots,B_{s+t}$. For $j=s+1,\dots,s+t$ and $i=1,\dots,s$ let $b_{j,i}$ be an element of $B_j$  that exchanges symmetrically with $b_i\in B_i$.

Label elements of each $B_j$ among $B_{s+1},\dots,B_{s+t}$ with distinct labels $l_1,\dots,l_r$. Then each $B_j$ gets a label from $\{l_1,\dots,l_r\}^s$ which is a sequence of labels of $b_{j,1},\dots,b_{j,s}$. From the pigeon hole principle at least $2^{2^{rn(r-1,k)}}n(r-1,k)$ bases $B_j$ have the same label. Without loss of generality (statement of the lemma is independent on the order of bases) they are $B_{s+1},\dots,B_{s+t'}$, for $t'=2^{2^{rn(r-1,k)}}n(r-1,k)$. Now for $j=s+1,\dots,s+t'$ and $i=1,\dots,s$ the label of $b_{j,i}$ is the same for all $j$'s. So, we label $B_i$ with it. Again, from the pigeon hole principle at least $n(r-1,k)$ bases $B_i$ have the same label, without of loss of generality they are $B_1,\dots,B_{n(r-1,k)}$. The label of $b_{j,i}$ is the same for all these $i$'s. Thus, we can define $b_j:=b_{j,i}$. Now, for every $i=1,\dots,n(r-1,k)$ and every $j=s+1,\dots,s+t'$, $b_i\in B_i$ exchanges symmetrically with $b_j\in B_j$.  

Consider matroids $M_j:=(M/b_j)\vert_{B_1\cup\cdots\cup B_{n(r-1,k)}}$ for $j=s+1,\dots,s+t'$. Since there are at most $2^{2^{rn(r-1,k)}}$ matroids on the ground set $B_1\cup\cdots\cup B_{n(r-1,k)}$ (of size $rn(r-1,k)$), there are at least $n(r-1,k)$ indices $j$ for which $M_j$ is the same, without loss of generality for $j=s+1,\dots,s+n(r-1,k)$.

Now, we make a first modification of bases $B_1,\dots,B_n$. We exchange $b_i\in B_i$ symmetrically with $b_{s+i}\in B_{s+i}$ for every $i=1,\dots,n(r-1,k)$, obtaining bases $D_1,\dots,D_n$. Each $D_i$ for $i=1,\dots,n(r-1,k)$ has a distinguished element $d_i$ (former $b_{s+i}$), such that $d_i\in D_i$ exchanges symmetrically with $d_{i'}\in D_{i'}$ and matroids $N_i:=(M/d_i)\vert_{(D_1\setminus d_1)\cup\cdots\cup (D_{n(r-1,k)}\setminus d_{n(r-1,k)})}$ are the same (as restrictions of matroids $M_j$).

Consider $n(r-1,k)$-matroid $N_i$ of rank $r-1$ with $n(r-1,k)$ disjoint bases $F_1=D_1\setminus d_1,\dots,F_{n(r-1,k)}=D_{n(r-1,k)}\setminus d_{n(r-1,k)}$ (here we use the fact that $d_i\in D_i$ exchanges symmetrically with $d_{i'}\in D_{i'}$). From the inductive assumption it follows that there are disjoint bases $F'_1,\dots,F'_{n(r-1,k)}$ obtained from $F_1,\dots,F_{n(r-1,k)}$ by symmetric exchanges, among which there are $k$ bases (without loss of generality) $F'_{1},\dots,F'_{k}$ that contain the $k$-th blow-up of a basis of rank $r-1$. That is, one can label the elements of $F'_1\cup\cdots\cup F'_k$ with labels $l_1,\dots,l_{r-1}$, each $F'_{j}$ with distinct labels, such that every set of $r-1$ elements of distinct labels is a basis in $N_i$.

Notice that bases $B'_1=F'_1\cup d_1,\dots,B'_{n(r-1,k)}=F'_{n(r-1,k)}\cup d_{n(r-1,k)}$ are obtained from bases $D_1,\dots,D_{n(r-1,k)}$, hence also from bases $B_1,\dots,B_{n(r-1,k)}$, by symmetric exchanges in $M$. Moreover, bases $B'_{1},\dots,B'_{k}$ contain the $k$-th blow-up of a basis of rank $r$. Namely, one can label the elements of $B'_1\cup\cdots\cup B'_k$ with labels $l_1,\dots,l_r$ (we use the former labeling of $F'_1\cup\cdots\cup F'_k$, additionally elements $d_i$ get label $l_r$), each basis with distinct labels, such that every set of $r$ elements of distinct labels forms a basis in $M$. This proves the inductive assertion. 
\end{proof}

We are going to prove a generalization of Lemma \ref{LemmaBlowUp}, which asserts that additionally the desired $k$-th blow-up can be compatible with a fixed subset of a matroid. For this purpose we will use Ramsey theory for hypergraphs. A result of Erd\H{o}s \cite{Er64} implies the following lemma (see \cite{Du10} for possible generalizations):

\begin{lemma}\label{LemmaRamsey}
For every integers $r,k$ and $c$ there exists an integer $R=R(r,k,c)$, such that if $H$ is a $c$-colored (edges receive one of $c$ colors) complete $r$-uniform $r$-partite hypergraph of size $R$ (size of each part is $R$), then one can find in it a monochromatic complete subhypergraph $H'$ of size $k$.
\end{lemma}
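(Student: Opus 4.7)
The plan is to establish this by induction on $r$, which is simultaneously the uniformity and the number of parts. The base case $r = 1$ reduces to the pigeonhole principle: with $R \geq c(k-1)+1$, any $c$-coloring of a single part of size $R$ has a monochromatic subset of size $k$.

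For the inductive step, assume $R(r-1, \cdot, c)$ is defined. I would bootstrap via the link structure. Fix any vertex $v_0$ in the last part $V_r$; its link is the $c$-colored complete $(r-1)$-uniform $(r-1)$-partite hypergraph on $V_1, \dots, V_{r-1}$, where the edge $\{u_1, \dots, u_{r-1}\}$ inherits the color of the original edge $\{u_1, \dots, u_{r-1}, v_0\}$. Setting $K := R(r-1, k, c)$ and choosing $R \geq R(r-1, K, c)$, the inductive hypothesis applied to this link produces sub-parts $W_i^0 \subset V_i$ of size $K$ (for $i = 1, \dots, r-1$) on which the link is monochromatic.

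Next I would synchronize across $V_r$. Each vertex $v \in V_r$ determines a $c$-coloring pattern on the grid $W_1^0 \times \dots \times W_{r-1}^0$, with at most $c^{K^{r-1}}$ distinct patterns. If $R$ also satisfies $R \geq c^{K^{r-1}}(k-1)+1$, pigeonhole yields a subset $W_r \subset V_r$ of size $k$ on which all vertices induce the same pattern. This common pattern is itself a $c$-colored complete $(r-1)$-uniform $(r-1)$-partite hypergraph on parts of size $K = R(r-1, k, c)$, so a second application of the inductive hypothesis produces sub-parts $W_i \subset W_i^0$ of size $k$ (for $i = 1, \dots, r-1$) on which that pattern is monochromatic in some fixed color. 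The Cartesian product $W_1 \times \dots \times W_{r-1} \times W_r$ is then the desired monochromatic complete $r$-partite sub-hypergraph of size $k$.

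The main obstacle is simply the parameter bookkeeping: $R$ must simultaneously dominate $R(r-1, K, c)$ (to apply induction in the link) and $c^{K^{r-1}}(k-1)+1$ (to synchronize coloring patterns across $V_r$), with $K = R(r-1, k, c)$. This yields a tower-type upper bound on $R(r, k, c)$, which is crude but sufficient, since the later applications of Lemma \ref{LemmaRamsey} only require its existence, not a sharp quantitative form.
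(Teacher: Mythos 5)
Your proof is correct, but it is genuinely different from what the paper does: the paper does not prove Lemma \ref{LemmaRamsey} at all, it simply derives it from a density result of Erd\H{o}s \cite{Er64} (with \cite{Du10} cited for generalizations). Erd\H{o}s's theorem says that any sufficiently dense $r$-uniform $r$-partite hypergraph on parts of size $R$ contains a complete $r$-partite subhypergraph of size $k$; applying it to the majority color class (which has density at least $1/c$) immediately yields the lemma. Your argument instead is the standard self-contained Ramsey induction on $r$: pigeonhole on the $c^{K^{r-1}}$ possible coloring patterns that vertices of $V_r$ induce on a grid $W_1^0\times\dots\times W_{r-1}^0$ of side $K=R(r-1,k,c)$, followed by the inductive hypothesis applied to the common pattern. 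This is valid and arguably preferable here, since the paper only needs existence of $R(r,k,c)$ and your route avoids importing a counting argument. One remark: your first step --- fixing $v_0\in V_r$ and shrinking the $V_i$ to sets $W_i^0$ on which the link of $v_0$ is monochromatic --- is never used afterwards; the monochromaticity of that link plays no role in the conclusion, and you could take the $W_i^0$ to be arbitrary subsets of size $K$, replacing the condition $R\geq R(r-1,K,c)$ by the weaker $R\geq K$. This is a redundancy, not a gap; the rest of the bookkeeping ($R\geq c^{K^{r-1}}(k-1)+1$ for the pattern pigeonhole, then a second application of the inductive hypothesis at target size $k$) is exactly right and gives a tower-type bound, which suffices for the paper's purposes.
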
 

Let $B_1,\dots,B_k$ be disjoint bases of a matroid $M$ on the ground set $E$. Denote $F=E\setminus (B_1\cup\dots\cup B_k)$. Furthermore, we say that \emph{bases $B_1,\dots,B_k$ contain the $k$-th blow-up of a basis $B_1$ in $B_1\cup F$} if there exists a morphism $\psi:M\vert_{B_{1}\cup\dots\cup B_{k}\cup F}\rightarrow M\vert_{B_{1}\cup F}$, such that $\psi(B_1)=\dots=\psi(B_k)=B_1$ and $\psi$ is an identity on $B_1\cup F$. 

\begin{lemma}\label{LemmaBlowUp2}
For every positive integers $r,k$ and nonnegative integer $l$ there exists an integer $m=m(r,k,l)$, such that if $M$ is a matroid of rank $r$ on the ground set $E$, containing $m$ disjoint bases $B_1,\dots,B_m$ whose complement $F=E\setminus (B_1\cup\dots\cup B_m)$ is of size $l$, then there exists a modification of bases $B_1,\dots,B_m$ by symmetric exchanges to bases $B'_1,\dots,B'_m$ from which one can pick $k$ bases $B'_{i_1},\dots,B'_{i_k}$ that contain the $k$-th blow-up of basis $B_1$ in $B_1\cup F$. 
\end{lemma}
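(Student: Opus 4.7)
The plan is to combine Lemma \ref{LemmaBlowUp}, which supplies an arbitrarily large blow-up after symmetric exchanges, with hypergraph Ramsey (Lemma \ref{LemmaRamsey}), which refines the blow-up to be compatible with the fixed set $F$. Let $c$ denote a finite upper bound on the number of matroids on an $(r+l)$-element set; one can take $c\le 2^{2^{r+l}}$. Set $K=R(r,k,c)$ and $m(r,k,l)=n(r,K)$, where $R$ is from Lemma \ref{LemmaRamsey} and $n$ from Lemma \ref{LemmaBlowUp}.

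First I would apply Lemma \ref{LemmaBlowUp} to the $n(r,K)$ disjoint bases $B_1,\dots,B_m$. Since those symmetric exchanges involve only elements of $B_1\cup\dots\cup B_m$, the complement $F$ is preserved. The output gives bases $B'_1,\dots,B'_m$ among which $K$, say $B^{(1)},\dots,B^{(K)}$, contain the $K$-th blow-up of $B^{(1)}$; write $b^s_i$ for the element of $B^{(s)}$ carrying the $i$-th label. Build the complete $r$-partite $r$-uniform hypergraph on vertex parts $V_i=\{b^s_i:1\le s\le K\}$; each hyperedge is a transversal $T=\{b^{s_1}_1,\dots,b^{s_r}_r\}$ and is therefore a basis of $M$ by the blow-up property. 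Color $T$ by the isomorphism type, via the label identification, of the matroid $M\vert_{T\cup F}$. This yields at most $c$ colors, so since $K\ge R(r,k,c)$, Lemma \ref{LemmaRamsey} produces $k$ indices $s_1,\dots,s_k$ such that every transversal drawn from $B^{(s_1)},\dots,B^{(s_k)}$ has the same color.

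It then remains to verify that $B^{(s_1)},\dots,B^{(s_k)}$ contain the $k$-th blow-up of $B^{(s_1)}$ in $B^{(s_1)}\cup F$. Define $\psi$ to be the identity on $B^{(s_1)}\cup F$ and to send $b^{s_j}_i$ to $b^{s_1}_i$ for $j>1$. For any basis $B'=S\cup F'$ of $M\vert_{B^{(s_1)}\cup F}$ with $S\subseteq B^{(s_1)}$, $F'\subseteq F$, and any system of representatives of $\{\psi^{-1}(b)\}_{b\in B'}$, extend the chosen representatives of the elements of $S$ arbitrarily to a full transversal $T$; monochromaticity gives a label-preserving isomorphism $M\vert_{T\cup F}\cong M\vert_{B^{(s_1)}\cup F}$, under which $S\cup F'$ corresponds to the chosen representative set together with $F'$, so the latter is a basis of $M$. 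The main obstacle is designing the Ramsey coloring to capture the entire matroid structure on $T\cup F$ rather than just whether $T$ itself is a basis: only by fixing the whole restriction do the basis statuses of all subsets of the form $S'\cup F'$ agree across the monochromatic family, which is exactly what the definition of morphism requires.
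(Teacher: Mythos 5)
Your overall strategy --- first invoke Lemma \ref{LemmaBlowUp} to produce a large blow-up, then refine it with the partite hypergraph Ramsey Lemma \ref{LemmaRamsey} using a coloring that records the matroid structure of $T\cup F$ --- is exactly the route the paper takes, and your coloring (by the labeled isomorphism type of $M\vert_{T\cup F}$) is a harmless strengthening of the paper's coloring, which only records which sets of the form $(\psi^{-1}(S)\cap D)\cup T'$ with $S\subset B^{(1)}$ and $T'\subset F$ are bases. However, there is a genuine gap in how you use the Ramsey conclusion. Lemma \ref{LemmaRamsey} returns a monochromatic complete subhypergraph $H'$ with $k$ vertices in \emph{each part} $V_i$; it does not return $k$ indices $s_1,\dots,s_k$ such that the chosen vertices of part $V_i$ are exactly $\{b^{s_1}_i,\dots,b^{s_k}_i\}$ for every $i$. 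In general the $k$ surviving vertices of $V_1$ and the $k$ surviving vertices of $V_2$ come from different sets of original bases, so the family $B^{(s_1)},\dots,B^{(s_k)}$ you want to pick need not exist, and the subsequent verification --- which compares an arbitrary transversal of $H'$ to $M\vert_{B^{(s_1)}\cup F}$ --- is unjustified, because $B^{(s_1)}$ need not be an edge of $H'$ at all and hence need not carry the common color.

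The repair is short and is what the paper does: instead of selecting $k$ of the bases $B^{(1)},\dots,B^{(K)}$, take $D_1,\dots,D_k$ to be $k$ pairwise disjoint edges (transversals) of $H'$, complete them by disjoint edges $D_{k+1},\dots,D_K$ of $H$ to a partition of $B^{(1)}\cup\dots\cup B^{(K)}$, and observe that, since every transversal of the blow-up is a basis and elements sharing a label are interchangeable by symmetric exchanges, the family $D_1,\dots,D_K$ is again obtained from $B_1,\dots,B_m$ by symmetric exchanges --- which the statement of the lemma permits. Now $D_1$ is an edge of $H'$, the parts of $H'$ are exactly the fibres $\psi^{-1}(b)$ for $b\in D_1$, and every transversal of $H'$ has the same color as $D_1$; your verification then goes through verbatim with $B^{(s_1)}$ replaced by $D_1$, the monochromaticity supplying the label- and $F$-preserving isomorphism $M\vert_{T\cup F}\cong M\vert_{D_1\cup F}$ needed to certify the morphism. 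With this correction your argument coincides with the paper's proof.
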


\begin{proof}
We will show that for $m=m(r,k,l)=n(r,R(r,k,2^{r+l}))$ the desired property holds, where $n,R$ are the functions from Lemmas \ref{LemmaBlowUp} and \ref{LemmaRamsey}.

Let $M$ be a matroid of rank $r$ on the ground set $E$, containing $m$ disjoint bases $B_1,\dots,B_m$ whose complement $F=E\setminus (B_1\cup\dots\cup B_m)$ has $l$ elements. Due to Lemma \ref{LemmaBlowUp} there exists a modification of bases $B_1,\dots,B_m$ by symmetric exchanges to bases $B'_1,\dots,B'_m$ from which one can pick $R:=R(r,k,2^{r+l})$ bases (without loss of generality) $B'_{1},\dots,B'_{R}$ that contain the $R$-th blow-up of basis $B'_{1}$. So, there exists a morphism $\psi:M\vert_{B'_{1}\cup\dots\cup B'_{R}}\rightarrow M\vert_{B'_{1}}$, such that $\psi(B'_1)=\dots=\psi(B'_R)=B'_1$ and $\psi$ is an identity on $B'_1$. 

There is only one possible extension of the morphism $\psi$ to $\psi':M\vert_{B'_{1}\cup\dots\cup B'_{R}\cup F}\rightarrow M\vert_{B'_{1}\cup F}$, such that $\psi'$ is an identity on $B'_1\cup F$. It is a morphism if for every $i=1,\dots,r-1$, for every $i$-element proper subset $S\subset B'_1$, and for every choice of representatives of sets $\{\psi^{-1}(b)\}_{b\in S}$, their union with a $(r-i)$-element subset $T\subset F$ is a basis of $M$ if and only if $S\cup T$ is a basis in $M$. We will show that, but for a smaller blow-up.

Consider a complete $r$-uniform $r$-partite hypergraph $H$ with parts $\psi^{-1}(e)$ for $e\in B'_1$, each part of size $R$. Edges of $H$ are bases in $M$, since $\psi$ is a morphism from $M$. Define a $2^{r+l}$-coloring $c$ of edges of $H$. For each $i$-element proper subset $S\subset B'_1$, for each $(r-i)$-element subset $T\subset F$ and for an edge $D$ of $H$, let the bit $c_{S,T}(D)$ of $c(D)$ be $1$ if $(\psi^{-1}(S)\cap D)\cup T$ is a basis in $M$, and $0$ otherwise. 

Using Lemma \ref{LemmaRamsey} for $H$ with coloring $c$ we get a monochromatic subhypergraph $H'$ of size $k$. Let $D_1,\dots,D_k$ be $k$ disjoint edges of $H'$ (they are also bases of $M$), and let $D_{k+1},\dots,D_R$ be disjoint edges of $H$ completing a partition of the ground set of $H$. Then, since $H$ is a complete hypergraph, bases $D_1,\dots,D_R$ are obtained from $B'_{1},\dots,B'_{R}$ by symmetric exchanges. Hence also bases $D_1,\dots,D_R,B'_{R+1},\dots,B'_m$ are obtained from bases $B_{1},\dots,B_{m}$ by symmetric exchanges. Directly from the construction, there exists a desired morphism $\psi:M\vert_{D_{1}\cup\dots\cup D_{k}\cup F}\rightarrow M\vert_{D_{1}\cup F}$.
\end{proof}

%%%%%%%%%%%%%%%%%%%%%%%%%%%%%%%%%%%%%%%%%%%%%%%%%%%%%%%%%%%%%%%%%%%%%%%%%%%%%%%%%%%%%%%%%%%%%%%%%%%%%%%%%%%%%%%%%%%%%%%%%%%%%%%%%%%%%%%%%%%%%%%%%%%%%%%%%%%%%%%%%%%%%%%%%
\section{White's conjecture for high degrees}\label{6}
%%%%%%%%%%%%%%%%%%%%%%%%%%%%%%%%%%%%%%%%%%%%%%%%%%%%%%%%%%%%%%%%%%%%%%%%%%%%%%%%%%%%%%%%%%%%%%%%%%%%%%%%%%%%%%%%%%%%%%%%%%%%%%%%%%%%%%%%%%%%%%%%%%%%%%%%%%%%%%%%%%%%%%%%%%

\begin{theorem}\label{Theorem1}
For every $r$ there exists a constant $c(r)$ such that for every matroid $M$ of rank $r$ the homogeneous parts of degree at least $c(r)$ of the toric ideal $I_M$ are generated by quadratic binomials corresponding to symmetric exchanges.
\end{theorem}

\begin{proof}
Let $c(r)=(r+3)!+m(r,r(r+3)!,r(r+3)!)$, where $m$ is the function from Lemma \ref{LemmaBlowUp2}.

Let $M$ be a matroid of rank $r$. We have to show $(J_M)_d=(I_M)_d$ for $d\geq c(r)$. The inclusion $J_M\subset I_M$ implies that $(J_M)_d\subset (I_M)_d$ for every $d$. To prove the opposite inclusion, let $b\in I_M$ be a binomial of degree $d\geq c(r)$. By Theorem \ref{Theorem2} we have that $b=\sum_{i=1}^na_ib_i$, where $b_i\in I_M$ is a binomial of degree $(r+3)!$ and $a_i$ is a monomial of degree greater or equal to $m:=m(r,r(r+3)!,r(r+3)!)$. We show that $ab\in J_M$, for every binomial $b\in I_M$ of degree $(r+3)!$ and every monomial $a$ of degree $m$. 

Suppose that $b=y_{D_1}\cdots y_{D_{(r+3)!}}-y_{G_1}\cdots y_{G_{(r+3)!}}$ and $a=y_{B_1}\cdots y_{B_m}$.
Denote the union $D_1\cup\cdots\cup D_{(r+3)!}\cup B_1\cup\dots\cup B_m$ by $S$ as a set, and by $(S,\mu)$ as a multiset in which $\mu$ is the multiplicity function. Let $M'$ be a matroid obtained from $M\vert_S$ by replacing every element $e$ by $\mu(e)$ parallel elements. That is, there is a morphism $\psi': M'\rightarrow
M\vert_S$. Let $D'_1,\dots,D'_{(r+3)!},B'_1,\dots,B'_m$ be disjoint bases of $M'$ such that $\psi'(D'_i)=D_i$ and $\psi'(B'_i)=B_i$. Let $G'_1,\dots,G'_{(r+3)!}$ be disjoint bases of $M'$ such that $D'_1\cup\dots\cup D'_{(r+3)!}=G'_1\cup\dots\cup G'_{(r+3)!}$ and $\psi'(G'_i)=G_i$. Let $b',a'$ be the corresponding polynomials. Clearly, $b'\in I_{M'}$. Observe that $ab\in J_{M}$ if and only if $a'b'\in J_{M'}$. Therefore, we need to show that $a'b'\in J_{M'}$.

Due to Lemma \ref{LemmaBlowUp2} applied for $m$ disjoint bases $B'_1,\dots,B'_m$ in $M'$ whose complement $F=D'_1\cup\cdots\cup D'_{(r+3)!}$ is of size $l=r(r+3)!$, there exists a modification of bases $B'_1,\dots,B'_m$ by symmetric exchanges to bases $B''_1,\dots,B''_m$ among which $k=r(r+3)!$ bases (without loss of generality) $B''_1,\dots,B''_k$ contain the $k$-th blow-up of basis $B''_1$ in $B''_1\cup F$.
Let $\psi:M'\vert_{B''_{1}\cup\dots\cup B''_{k}\cup F}\rightarrow M'\vert_{B''_{1}\cup F}$ be a morphism, such that $\psi(B''_1)=\dots=\psi(B''_k)=B''_1$ and $\psi$ is an identity on $B''_1\cup F$. Denote $a''=y_{B''_1}\cdots y_{B''_m}$. Then, $a'-a''\in J_{M'}$. Hence, it is enough to show that $a''b'\in J_{M'}$, or even that $y_{B''_1}\cdots y_{B''_k}b'\in J_{M'}$.

We use Observation \ref{ObservationBlowDown} for morphism $\psi$ and $f=y_{B''_1}\cdots y_{B''_k}b'\in I_M$. We have $\psi(y_{B''_1}\cdots y_{B''_k}b')=y_{B''_1}^{r(r+3)!}b'$, so it is enough to show that $y_{B''_1}^{r(r+3)!}b'\in J_{M'}$.

Finally, we use \cite[Claim $4$]{LaMi14}. It asserts that for every basis $B$ of $M'$, if $b'\in I_{M'}$ is a binomial, then $y_B^{\deg_B(b)-\deg(b)}b'\in J_{M'}$ (where $\deg_B(y_{B'})=\left\vert B'\setminus B\right\vert$). In our case $y_{B''_1}^{r(r+3)!}b'\in J_{M'}$, since the degree of $b'$ is $(r+3)!$. This finishes the proof.
\end{proof}

%%%%%%%%%%%%%%%%%%%%%%%%%%%%%%%%%%%%%%%%%%%%%%%%%%%%%%%%%%%%%%%%%%%%%%%%%%%%%%%%%%%%%%%%%%%%%%%%%%%%%%%%%%%%%%%%%%%%%%%%%%%%%%%%%%%%%%%%%%%%%%%%%%%%%%%%%%%%%%%%%%%%%%%%%
\section{Acknowledgements}
%%%%%%%%%%%%%%%%%%%%%%%%%%%%%%%%%%%%%%%%%%%%%%%%%%%%%%%%%%%%%%%%%%%%%%%%%%%%%%%%%%%%%%%%%%%%%%%%%%%%%%%%%%%%%%%%%%%%%%%%%%%%%%%%%%%%%%%%%%%%%%%%%%%%%%%%%%%%%%%%%%%%%%%%%%

We thank Professor J{\"u}rgen Herzog for a question during the conference in Osnabr\"{u}ck in $2015$ that initiated this study. We thank Jan Draisma and Mateusz Micha{\l}ek for stimulating discussions and helpful comments. We are also grateful to the anonymous referee for showing us the proof of Corollary \ref{Corollary2}.

%%%%%%%%%%%%%%%%%%%%%%%%%%%%%%%%%%%%%%%%%%%%%%%%%%%%%%%%%%%%%%%%%%%%%%%%%%%%%%%%%%%%%%%%%%%%%%%%%%%%%%%%%%%%%%%%%%%%%%%%%%%%%%%%%%%%%%%%%%%%%%%%%%%%%%%%%%%%%%%%%%%%%%%%%%

\end{document}